\theoremstyle{definition}
\newtheorem{definition}{Definition}
\newtheorem{question}{Question}
\theoremstyle{plain}
\newtheorem{theorem}{Theorem}
\newtheorem{proposition}{Proposition}
\newtheorem{fact}{Fact}
\newtheorem{corollary}{Corollary}
\theoremstyle{remark}
\newtheorem{remark}{Remark}
\newtheorem{example}{Example}
\newcommand{\tr}{
\begin{picture}(5,10)
\put(3,3){\circle{10}}
\qbezier(3,-2)(3,3)(3,8)
\qbezier(-1,-1)(3,3)(7,7)
\qbezier(7,-1)(3,3)(-1,7)
\end{picture}
}
\newcommand{\h}{
\begin{picture}(5,10)
\put(3,3){\circle{10}}
\qbezier(1,-1)(1,3)(1,8)
\qbezier(-1,3)(3,3)(8,3)
\qbezier(5,-1)(5,3)(5,8)
\end{picture}
}
\begin{document}
\title[Knot projections with reductivity two]{Knot projections with reductivity two}
\author{Noboru Ito}
\address{Waseda Institute for Advanced Study, 1-6-1 Nishi Waseda Shinjuku-ku Tokyo 169-8050, Japan}
\email{noboru@moegi.waseda.jp}
\author{Yusuke Takimura}
\address{Gakushuin Boys' Junior High School, 1-5-1 Mejiro Toshima-ku Tokyo 171-0031, Japan}
\email{Yusuke.Takimura@gakushuin.ac.jp}
%\author{Kouki Taniyama}
\thanks{2010 {\it{Mathematics Subject Classification.}}  57M25, 57Q35.}
%\subjclass{57M25, 57Q35}
\keywords{reductivity; knot projection; spherical curve}
\date{July 15, 2015}
\maketitle
\begin{abstract}
Reductivity of knot projections refers to the minimum number of splices of double points needed to obtain reducible knot projections.  Considering the type and method of splicing (Seifert type splice or non-Seifert type splice, recursively or simultaneously), we can obtain four reductivities containing Shimizu's reductivity, three of which are new.  In this paper, we determine knot projections with reductivity two for all four of the definitions.  We also provide easily calculated lower bounds for some reductivities.  Further, we detail properties of each reductivity, and describe relationships among the four reductivities with examples.  
\end{abstract}
\section{Introduction}\label{intro}
%Recently, a notion of reductivity was introduced by A.~Shimizu \cite{S} which is concerned with unavoidable sets of knot projections.  
A {\it{knot projection}} is the image of a generic immersion of an unoriented circle into the sphere $S^2$.  By this definition, a knot projection is an unoriented generic immersed curve on $S^2$, also called a {\it{spherical curve}}.  In this study, we assume that every knot projection has at least one double point.  
As previously proven, every knot projection without $1$- and $2$-gons has $3$-gons.  
%An {\it{unavoidable set}} consisting of $2$-gon and $3$-gon.
As shown in Fig.~\ref{rf0}, there are four types of $3$-gons.  
\begin{figure}[h!]
\includegraphics[width=7cm]{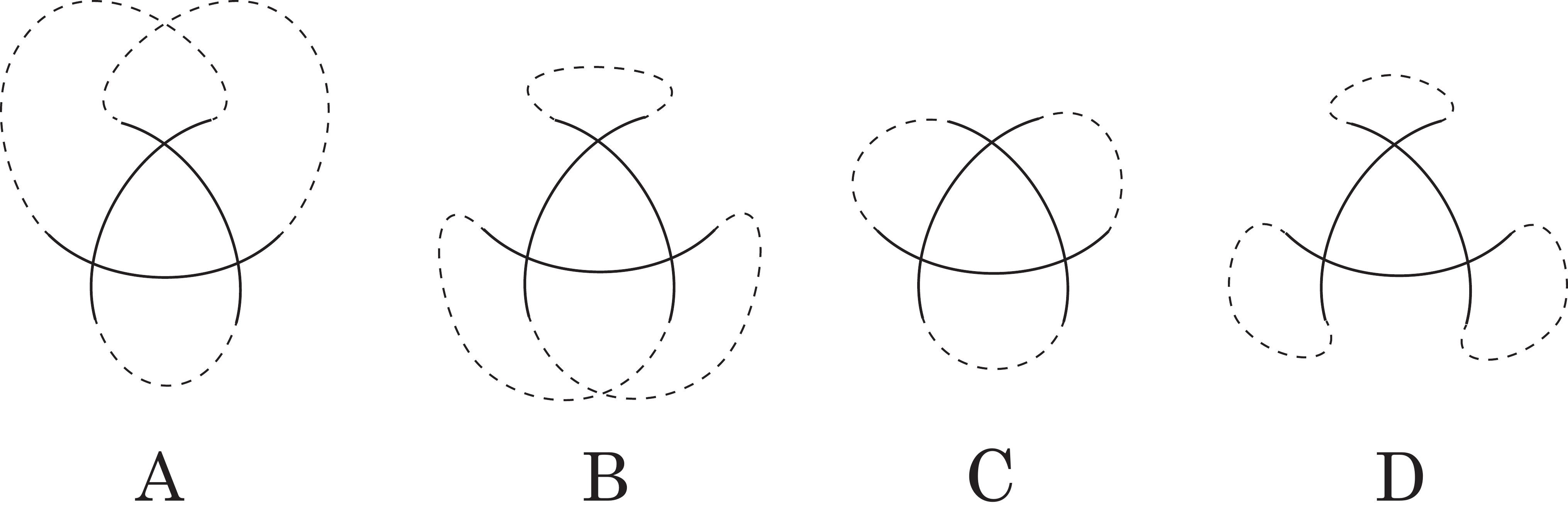}
\caption{All types of $3$-gons.  For each $3$-gon, three dotted arcs show the connections of branches of three double points.}\label{rf0}
\end{figure}
%Note that there is all types of a $3$-gon are four types, as shown in Fig.~\ref{F1}.  
%This set consisting of five elements is introduced by Shimizu \cite{S}.  
%Further, the following problem still open
Of interest is the still-unanswered question below (\cite[Question 3.2]{S}).  
\begin{question}[Shimizu \cite{S}]\label{q1}
Is it true that every knot projection without $1$- and $2$-gons has at least one $3$-gon of type $A$, $B$, or $C$, as shown in Fig.~\ref{rf0}?  
\end{question}
If it is true, we can more easily prove the triple chord's theorem \cite{ITtriple}:
\begin{fact}[Ito-Takimura \cite{ITtriple}]
Every chord diagram of a knot projection without $1$- and $2$-gons contains \tr.    
\end{fact}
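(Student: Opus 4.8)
The plan is to argue by contradiction in the language of chord diagrams, reducing everything to a rigid configuration extracted from a pair of crossing chords. Two elementary reformulations come first. A knot projection has no $1$-gon exactly when no chord of its chord diagram is isolated: a chord with an endpoint-free arc on one side is itself a $1$-gon, while an isolated chord having endpoints on both sides produces, by an innermost-chord argument inside one of its two arcs, a $1$-gon elsewhere. Second, a chord diagram contains \tr precisely when some three of its chords pairwise cross, since three pairwise-crossing chords necessarily have their six endpoints in the cyclic order $abcabc$, which is exactly the pattern \tr. Thus the goal becomes: a chord diagram realizable as a knot projection with no $1$- and no $2$-gons must contain three pairwise-crossing chords.

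Since no chord is isolated, fix a crossing pair $c_1,c_2$; their four endpoints cut the base circle into four arcs $I_1,I_2,I_3,I_4$ in cyclic order. The pivotal dichotomy: if some chord has its two endpoints in opposite arcs ($I_1$ and $I_3$, or $I_2$ and $I_4$), it crosses both $c_1$ and $c_2$, so together with $c_1,c_2$ it is the triple we want. Otherwise every chord other than $c_1,c_2$ has both endpoints in two cyclically adjacent arcs; absence of $1$-gons rules out a chord lying inside a single arc (innermost again), so each such chord joins two adjacent arcs, all chords joining a fixed adjacent pair cross the \emph{same} one of $c_1,c_2$, and any two of them that cross would — with that common chord — again give a triple. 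Hence, assuming no triple chord, the diagram consists of $c_1,c_2$ together with four ``fans'' of pairwise non-crossing chords, one straddling each of the four junctions between consecutive arcs. (To begin from a controlled situation one may take $c_1,c_2$ to be a crossing pair inside a $3$-gon, which exists by the theorem quoted above, and which forces one of the fans to be short.)

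It remains to produce a $2$-gon from this rigid picture, contradicting the hypothesis. If all four fans are empty the diagram is the code $c_1c_2c_1c_2$, all of whose faces are bigons. Otherwise, in a nonempty fan I would take a chord hugging its junction: it sits just inside $c_1$ or $c_2$ (or runs parallel to it) with no endpoint separating it from that chord on one end, and then I would follow its other end — using that the intervening arcs are forced to be endpoint-free, or, in the borderline case, the planarity condition for Gauss codes on $S^2$ — to exhibit the two curve-edges bounding a bigon.

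The step I expect to be the genuine obstacle is exactly this last one: showing that ``a crossing pair plus four fans of nested chords, no $1$-gon'' always harbours a $2$-gon. The difficulty is the interference between adjacent fans — two chords straddling different junctions but sharing an arc — which defeats the naive extremal choice; one must split into cases according to the relative order of the fan endpoints along the arcs, and invoke realizability on $S^2$ precisely to eliminate the configurations that the purely cyclic analysis cannot. This casework is essentially the content of \cite{ITtriple}.
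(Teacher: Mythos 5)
This Fact is quoted in the paper from \cite{ITtriple} without proof, so your argument has to stand on its own; it does not. The decisive step --- that a realizable diagram consisting of a crossing pair $c_1,c_2$ together with four ``fans'' of chords straddling the junctions, with no $1$-gon, must exhibit a $2$-gon --- is exactly where the whole difficulty of the triple chord theorem lives, and you leave it as a gesture (``take a chord hugging its junction \dots invoke realizability''), conceding yourself that the required casework ``is essentially the content of \cite{ITtriple}.'' A reduction that defers its crux to the theorem being proved is not a proof. The interference between adjacent fans that you flag is real and is not controlled by anything in your outline; moreover realizability on $S^2$ must enter in an essential, quantitative way (abstract chord diagrams with no empty arc and no $abcabc$ triple exist in abundance), so the purely cyclic analysis cannot be completed without some genuinely new input (face/Euler-characteristic counting or the $(1,2)$-homotopy machinery of the cited paper).

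Two of your preliminary reductions are also flawed. The biconditional ``no $1$-gon exactly when no chord is isolated'' is false: an isolated chord is a nugatory crossing, not a $1$-gon, and the innermost-chord argument you invoke breaks down because the chords inside one arc of the isolated chord may pairwise cross (join two standard trefoil projections at a single crossing: the connecting chord crosses nothing, yet the projection has no $1$-gon). What is true, and all you need there, is that a $1$-gon corresponds to a chord with an empty arc, and that if \emph{no} two chords cross then nestedness forces an empty arc; hence a crossing pair exists. More seriously, your claim that ``absence of $1$-gons rules out a chord lying inside a single arc $I_j$'' is unjustified for the same reason: such a chord may be crossed by chords straddling the junctions of $I_j$, so no empty arc is forced. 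Consequently the fan structure on which your final bigon hunt is based is not established from the hypotheses, and the argument has a gap both at that structural step and at the concluding one.
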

Here, a chord diagram is a circle, on which paired points are placed such that each pair of two points, corresponding to two pre-images of a double point of a knot projection, is connected by a chord.    

Shimizu's reductivity $r(P)$ \cite{S}, closely related to Question~\ref{q1}, is defined as follows: 
Shimizu's reductivity $r(P)$ of a knot projection $P$ is the minimum number of non-Seifert splices defined by Fig.~\ref{rf2}, applied recursively, to obtain a {\it{reducible knot projection}} from $P$.  
\begin{figure}[h!]
\includegraphics[width=3cm]{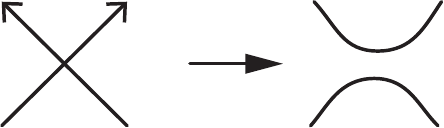}
\caption{Non-Seifert splice $A^{-1}$.}\label{rf2}
\end{figure}
Here, a reducible knot projection $P$ is a knot projection having a {\it{nugatory crossing}}, as shown in Fig.~\ref{rf1}.  A non-reducible knot projection is called a {\it{reduced knot projection}}.  Traditionally, an arbitrary non-Seifert splice is often denoted by $A^{-1}$ \cite{ItSh}.  Note that $A^{-1}$ does not depend on the selection of the orientation of a knot projection.
\begin{figure}[h!]
\includegraphics[width=3cm]{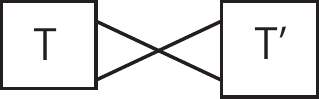}
\caption{A nugatory crossing between two (1, 1)-tangles, $T$ and $T'$.}\label{rf1}
\end{figure}
%motivates her to approach to her question \cite[Question 1.2]{S} where her
%Shimizu's reductivity $r(P)$ of a knot projection $P$ is the minimum number of operation $A^{-1}$ defined by Fig.~\ref{rf2}, applied recursively, to obtain reducible knot projection from $P$.    
%As Shimizu pointed out 
%that if is true, the following question is also true.  
Shimizu's Question~\ref{q2} remains unanswered, as well.  
\begin{question}[Shimizu \cite{S}]\label{q2}
Is it true that $r(P) \le 3$ for every knot projection $P$?  
\end{question}
As Shimizu points out, if the answer to Question~\ref{q1} is {\it{yes}}, the answer to Question~\ref{q2} is also {\it{yes}} \cite{S}.  

%%%%2015/0620%%%%%%
In general, it is not easy to compute Shimizu's reductivity $r$.  However, this paper provides the lower bounds that can be easily calculated for an infinite family of knot projections (Theorem~\ref{thm_lower} and Examples~\ref{ex1} and \ref{ex2}).  Moreover, we explicitly obtain the necessary and sufficient condition for a knot projection to satisfy $r(P)=2$ (Theorem~\ref{r2}).  
%%%%%2015/0620%%%%%%%%%%%%%%

As a first step, \cite{ITtriple} establishes the necessary and sufficient condition that a knot projection has $r(P)=1$ (Fact~\ref{f3}).  
\begin{fact}\label{f3}
Let $P$ be a reduced knot projection $P$.  There exists a circle intersecting $P$ at just two double points of $P$, as shown in Fig.~\ref{rf6}, if and only if $r(P)=1$.  
\end{fact}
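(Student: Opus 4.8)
The plan is to prove the two implications separately, using throughout that $P$ reduced forces $r(P)\ge 1$ and that $r(P)=1$ means precisely that one non-Seifert splice $A^{-1}$, applied at a single double point $d$ of $P$, yields a reducible knot projection, which I denote $P_d$.

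Suppose first that a circle $C$ meets $P$ transversally at exactly two double points $d_1,d_2$, as in Fig.~\ref{rf6}. Then $C$ bounds disks $D_1,D_2$, and $P\cap D_i$ is a pair of arcs whose endpoints are the two branches at $d_1$ and the two branches at $d_2$ lying on the $D_i$-side. The key point is that, since $P$ is connected, the two arcs in $D_1$ run parallel: with respect to an orientation of $P$, both go from $d_1$ to $d_2$, or both from $d_2$ to $d_1$. Indeed, in any other pattern — including the case where an arc joins two branches of a single crossing — following $P$ through $d_1,d_2$ (pairing opposite branches at each crossing into the two local strands) closes up a loop before all four arcs are traversed, so $P$ would be disconnected. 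Hence at $d_1$ the two $D_1$-side branches are both oriented out of $d_1$ (or both into $d_1$), and the two $D_2$-side branches are both into $d_1$ (resp. out of $d_1$); comparing with the oriented (Seifert) resolution, the smoothing of $d_1$ that joins the two $D_1$-side branches to each other and the two $D_2$-side branches to each other is then the non-Seifert splice $A^{-1}$, not the Seifert one. Performing this splice at $d_1$, the two resulting arcs lie on opposite sides of $C$, so $C$ can be pushed off the resolved point, after which it meets $P_{d_1}$ only at $d_2$ and still passes through that double point; thus $d_2$ is a nugatory crossing of $P_{d_1}$. So $P_{d_1}$ is reducible, giving $r(P)\le 1$ and hence $r(P)=1$.

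Conversely, suppose $r(P)=1$, and fix $d$ with $P_d$ reducible; let $d'$ ($\neq d$) be a nugatory crossing of $P_d$, witnessed by a circle $C'$ that meets $P_d$ transversally only at $d'$ and passes through it. Let $N$ be a small disk around the former site of $d$; inside $N$, $P_d$ consists of the two smoothing arcs $\alpha,\beta$, which cut $N$ into three regions, one of which — the ``middle'' region $M$ — separates $\alpha$ from $\beta$. Reinserting the crossing turns $\alpha\cup\beta$ into the four-valent vertex $d$ and pinches $M$ together at $d$. Since $C'$ misses $\alpha\cup\beta$, each component of $C'\cap N$ lies in one of the three regions, and only an arc running across $M$ from one of its two free sides to the other will meet the vertex after the reinsertion. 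Choose $C'$ so that $C'\cap N$ has the fewest components (an innermost-arc reduction): then there is exactly one such arc — at least one, since otherwise reinserting $d$ leaves $C'$ disjoint from $P$ near $d$, so $C'$ would meet $P$ only at $d'$ and $d'$ would be nugatory in the reduced $P$, impossible; and not more than one, since two could be amalgamated away by an isotopy of $C'$ supported near $N$ and keeping $C'$ off $P_d$. Rerouting this single arc through the vertex $d$ converts $C'$ into a circle meeting $P$ transversally at exactly the two double points $d$ and $d'$, which is the circle of Fig.~\ref{rf6}.

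The genuinely combinatorial steps, where one must argue rather than read off a figure, are: in the first implication, that connectivity of $P$ rules out every non-parallel configuration and so forces the operative smoothing to be the non-Seifert one; and in the converse, the minimality (innermost-arc) argument reducing $C'$ to a single essential arc across $M$, which is precisely what makes the reconstructed circle hit $P$ only at $d$ and $d'$.
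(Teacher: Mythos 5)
There is a genuine flaw in the step you yourself single out as the key combinatorial point of the first implication. It is not true that connectedness of $P$ rules out every pattern in which an arc of $P\cap D_1$ joins two branches of a single crossing. Label the branches at $d_1$ by $A^{\pm},B^{\pm}$ (same letter $=$ same strand, $+$ $=$ the $D_1$ side) and at $d_2$ by $C^{\pm},D^{\pm}$, and consider the mixed pattern: in $D_1$ the arcs join $A^{+}$ to $B^{+}$ and $C^{+}$ to $D^{+}$, while in $D_2$ they join $A^{-}$ to $C^{-}$ and $B^{-}$ to $D^{-}$. Tracing the curve ($A^{+}\!-\!B^{+}$ arc, through $d_1$ to $B^{-}$, arc to $D^{-}$, through $d_2$ to $D^{+}$, arc to $C^{+}$, through $d_2$ to $C^{-}$, arc to $A^{-}$, through $d_1$ back to $A^{+}$) uses all four arcs and closes up only at the end, so this is a single connected knot projection, and it can be realized as a reduced one (put nontrivial tangles in the two loops lying in $D_1$). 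So connectivity forces only the ``crossed'' choice once the arcs are known to run between $d_1$ and $d_2$; it does not force them to run between $d_1$ and $d_2$ in the first place. For the direction ``circle $\Rightarrow r(P)=1$'' this error happens to be harmless, because the dotted arcs in Fig.~\ref{rf6} are part of the hypothesis: granting that pattern, your orientation count showing the side-separating smoothing is the non-Seifert one, and the push-off of the circle making $d_2$ nugatory, are fine.

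Where the error does bite is in the converse. Your construction (small disk $N$, middle region $M$, innermost reduction, rerouting the one essential arc through $d$) correctly produces a circle meeting $P$ transversally exactly at $d$ and $d'$, but the Fact asserts the configuration of Fig.~\ref{rf6}, i.e.\ the specific connections of branches, and you never verify these; the sentence ``which is the circle of Fig.~\ref{rf6}'' implicitly leans on the false claim that the pattern is automatic from connectedness. The gap is fixable: since the rerouted circle meets the spliced projection $P_d$ only at $d'$, each side of it intersects $P_d$ in exactly one nugatory loop of $d'$, and that loop contains exactly one of the two smoothing arcs $\alpha,\beta$; reinserting the crossing cuts each loop open at the splice site, so $P$ meets each side in two arcs, each running from $d$ to $d'$ --- which (together with connectedness, selecting the crossed pairing) is precisely the pattern of Fig.~\ref{rf6}. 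A smaller point in the same step: minimizing the number of components of $C'\cap N$ controls arcs meeting the reinserted vertex, but inessential arcs in $M$ could still cross the reinserted strands away from the vertex; one needs an additional isotopy (or a slightly stronger minimality) to push them off. For comparison, the paper's own treatment of such statements (Fact~\ref{f3} is quoted from the triple-chord paper, and the analogous Theorem~\ref{r1} is proved here) runs in the opposite direction: start from the reducible projection with its nugatory circle, apply the inverse splice $A$, and enumerate the finitely many ways the dotted arcs can connect, discarding those that disconnect the curve --- an enumeration performed in a constrained local picture where the connectivity argument really is decisive.
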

As a corollary, 
\begin{fact}[Ito-Takimura \cite{ITtriple}]
Every chord diagram of a knot projection $P$ with $r(P)=1$ has a sub-chord diagram \tr.
\end{fact}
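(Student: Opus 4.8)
The plan is to derive this directly from Fact~\ref{f3}. Let $P$ be a knot projection with $r(P)=1$; in particular $P$ is reduced. By Fact~\ref{f3} there is a circle $\gamma\subset S^2$ meeting $P$ transversally in exactly the two double points $d_1,d_2$ of $P$ (cf.\ Fig.~\ref{rf6}); write $D_1,D_2$ for the two closed disks bounded by $\gamma$. The goal is to exhibit three double points of $P$ whose chords pairwise interleave in the chord diagram, i.e.\ a sub-chord diagram \tr.

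First I would read off the combinatorics forced by $\gamma$. The four pre-images $p_1,p_1'$ of $d_1$ and $p_2,p_2'$ of $d_2$ cut the source circle $S^1$ into four arcs. Since $\gamma\cap P=\{d_1,d_2\}$, the image of each arc lies in $D_1$ or in $D_2$; and since $\gamma$ is transverse to $P$ at $d_1$ and $d_2$ it separates the two local branches there, so consecutive arcs of $S^1$ map into opposite disks. Hence $P\cap\overline{D_1}$ is a union of two arcs $\alpha_1,\alpha_2$ and $P\cap\overline{D_2}$ a union of two arcs $\beta_1,\beta_2$, each having its two endpoints among $\{p_1,p_1',p_2,p_2'\}$ on $\gamma$, and $P$ is recovered by joining these four arcs cyclically at $d_1$ and $d_2$.

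Next I would exploit that $P$ is reduced, through the principle that a reduced knot projection has no monogon (an empty $1$-gon face makes its vertex nugatory). Hence $P$ has at least three double points, so besides $d_1,d_2$ there is a third, lying in the interior of $D_1$ or of $D_2$; and no arc $\alpha_i$ or $\beta_j$ can self-intersect, since an innermost self-intersection loop would bound a disk disjoint from the remaining arcs (whose endpoints lie on $\gamma$, outside that disk), hence an empty monogon of $P$. Therefore the third double point is a point $d_3$ at which $\alpha_1$ meets $\alpha_2$ (or, symmetrically, at which $\beta_1$ meets $\beta_2$). The same circle of ideas should also show that no $\alpha_i$ or $\beta_j$ is a loop based at $d_1$ or $d_2$, which pins the cyclic order of the four pre-images on $S^1$ as $p_1,p_2,p_1',p_2'$, so that already the chords of $d_1$ and $d_2$ interleave. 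An elementary cyclic-order check then finishes it: the pre-images of $d_3$ lie one on the $\alpha_1$-arc (between $p_1$ and $p_2$) and one on the $\alpha_2$-arc (between $p_1'$ and $p_2'$), giving cyclic order $p_1,q,p_2,p_1',q',p_2'$, and one reads off that $d_1,d_2,d_3$ pairwise interleave — i.e.\ $\{d_1,d_2,d_3\}$ realizes \tr.

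The main obstacle I anticipate is exactly the step I glossed over: ruling out the degenerate positions of the arcs relative to $\gamma$. The delicate case is when $\alpha_1,\alpha_2$ (or $\beta_1,\beta_2$) are loops based at $d_1,d_2$, so that the chords of $d_1$ and $d_2$ do \emph{not} interleave, together with the bookkeeping for non-embedded arcs when an innermost region turns out to be a bigon rather than a monogon. Handling this should require either a sharper innermost-disk analysis forcing a nugatory crossing in such configurations, or choosing $\gamma$ among all circles furnished by Fact~\ref{f3} so as to minimize the number of double points in one hemisphere and invoking minimality, or — if no contradiction appears — producing the triple chord from a different triple of double points (for instance $d_1$ together with two double points lying inside a single hemisphere). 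Once the configuration is pinned down, the cyclic-order argument is routine.
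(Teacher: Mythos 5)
Your reduction to Fact~\ref{f3} is the right starting point (and is exactly how the paper treats this statement: it is quoted from \cite{ITtriple} as a corollary of Fact~\ref{f3}, with no proof given here), but the bridging argument you supply has a genuine gap at its key step. You claim that no arc $\alpha_i,\beta_j$ can self-intersect because an innermost self-intersection loop would bound a disk ``disjoint from the remaining arcs, hence an empty monogon.'' That inference is not valid: an innermost (embedded) self-loop of $\alpha_1$ bounds a disk which the other arc $\alpha_2$, or another portion of $\alpha_1$, may perfectly well enter and leave again; having its endpoints on $\gamma$, outside that disk, does not keep an arc out of the disk's interior. So no empty $1$-gon, and hence no nugatory crossing, is forced, and reduced projections with $r(P)=1$ in which these arcs self-intersect do exist. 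Consequently the ``third double point'' you exhibit may be a self-crossing of a single arc, whose chord crosses neither $d_1$ nor $d_2$, and the triple chord does not follow from your argument. By contrast, the case you flag as the main anticipated obstacle (arcs forming loops based at $d_1$ or $d_2$, so that the chords of $d_1,d_2$ fail to interleave) is the easy part and needs none of the machinery you list: if in one disk the two arcs joined $d_1$ to $d_1$ and $d_2$ to $d_2$, the four arcs would close up into more than one component, contradicting that $P$ is a knot projection (this is precisely the connectivity elimination in the proof of Theorem~\ref{r1}, and the dotted arcs of Fig.~\ref{rf6} already encode that each arc runs from one double point to the other). So the interleaving of the chords of $d_1$ and $d_2$ is essentially given; what is missing is the third mutually crossing chord.

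That missing step can be closed without controlling self-intersections of the arcs. Since every double point of $P$ other than $d_1,d_2$ has both of its branches inside a single disk $D_1$ or $D_2$, its chord has both endpoints on a single one of the four arcs of the preimage circle cut out by the preimages of $d_1,d_2$; hence no chord of $P$ can interlace exactly one of the chords $d_1,d_2$ (such a chord would have to join the two hemispheres). Now use the classical parity property of realizable chord diagrams: each chord of a Gauss diagram of a closed curve interlaces an even number of chords. The chord of $d_1$ interlaces the chord of $d_2$, so it must interlace some further chord $e$, and by the previous remark $e$ interlaces $d_2$ as well; then $\{d_1,d_2,e\}$ is the desired sub-chord diagram \tr. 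With this (or some equivalent argument locating a crossing between $\alpha_1$ and $\alpha_2$ or between $\beta_1$ and $\beta_2$), your cyclic-order conclusion is fine; without it, the proposal does not yet prove the statement.
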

In this paper, we take this one step further, proving the following theorem and its corollary.
%achieved the next step:  
\begin{theorem}\label{r2}
Let $P$ be a reduced knot projection.  There is no possibility of existence of a simple closed curve, such as the one in Fig.~\ref{rf6}, and there exists a circle intersecting $P$ at just three double points of $P$, as shown in Fig.~\ref{rf5}, if and only if $r(P)=2$.  
%Every knot projection $P$ with $r(P)=2$ can be represented as one of figures in Fig.~\ref{rf5}. 
\begin{figure}[h!]
\includegraphics[width=2cm]{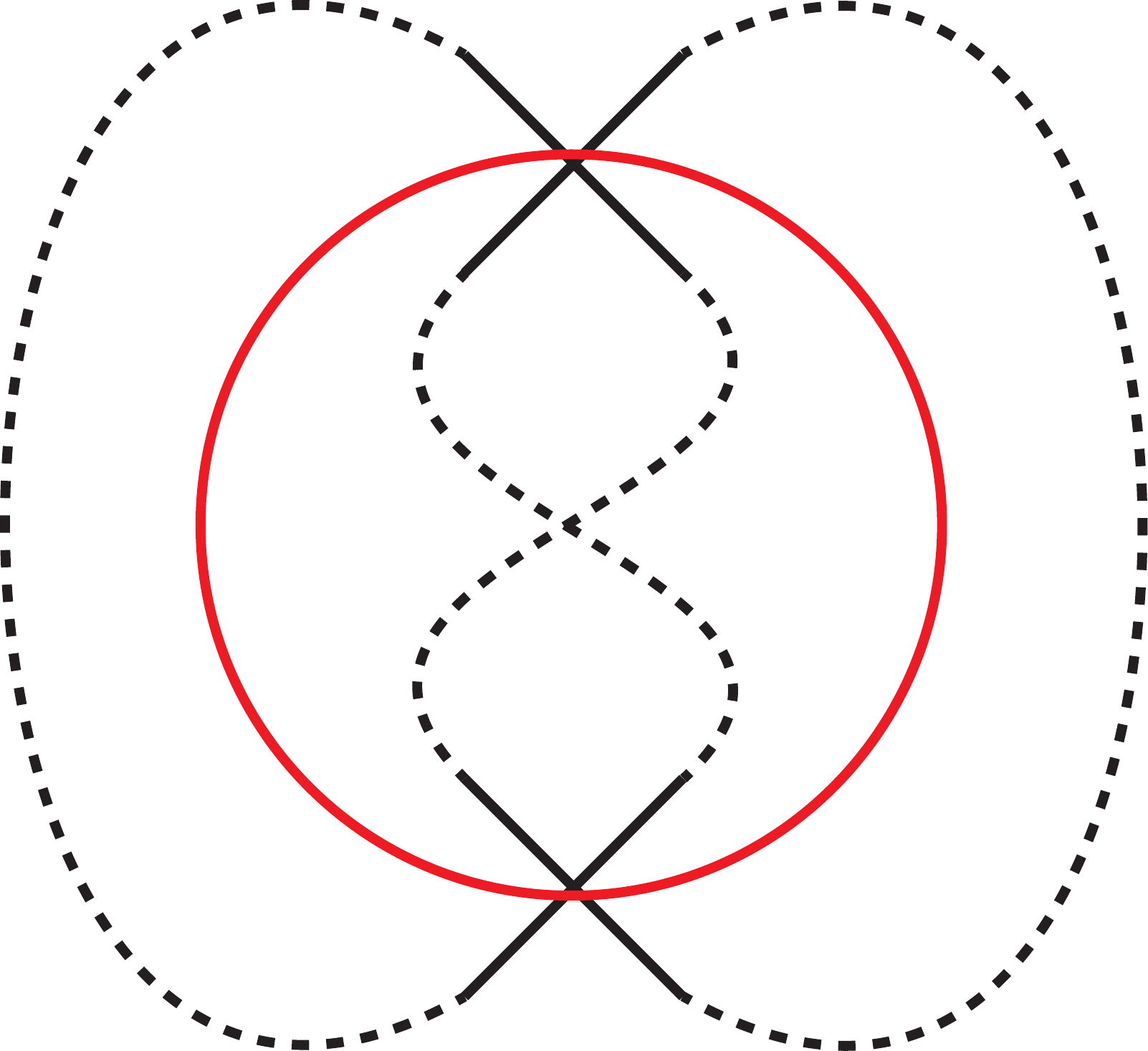}
\caption{A simple closed curve and a type of knot projection.  Dotted arcs show the connections of branches between two double points.}\label{rf6}
\end{figure} 
\begin{figure}[h!]
\includegraphics[width=10cm]{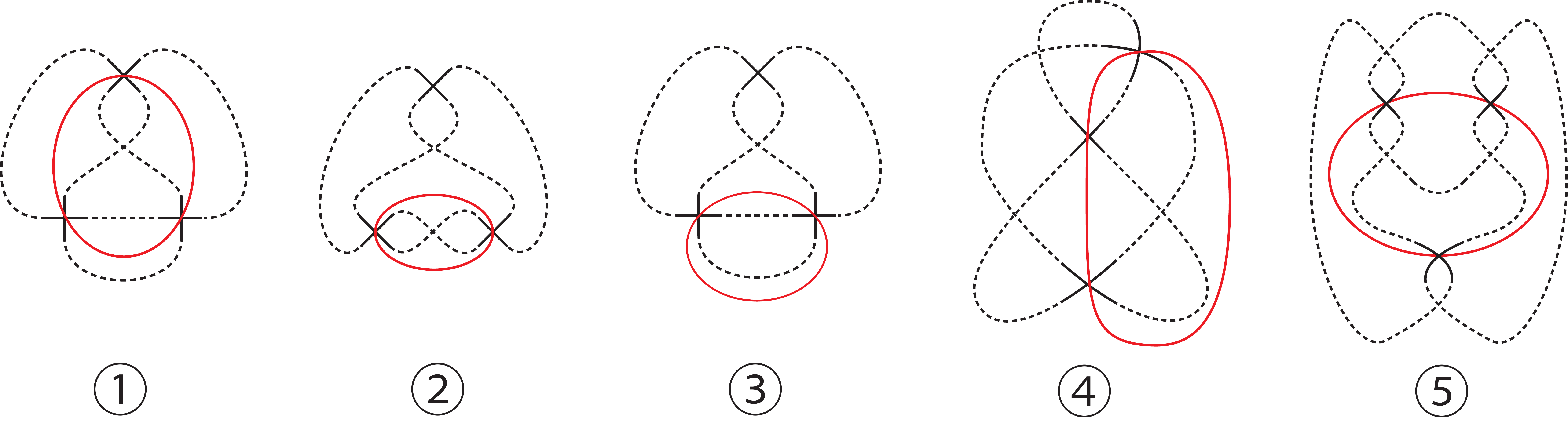}
\caption{All types of knot projections with Shimizu's reductivity two.}\label{rf5}
\end{figure}
\end{theorem}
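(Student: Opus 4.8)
The plan is to exploit the recursive structure of Shimizu's reductivity. For a double point $d$ of $P$, write $P_d$ for the result of the non-Seifert splice at $d$; a non-Seifert splice of a connected generic curve is again connected, so each $P_d$ is a knot projection. Then $r(P)=\min_d\bigl(1+r(P_d)\bigr)$, with $r=0$ on reducible projections, whence $r(P)=2$ if and only if (i) no $P_d$ is reducible and (ii) $r(P_{d_1})=1$ for some $d_1$. Because $P$ is reduced, (i) is equivalent, by Fact~\ref{f3}, to $P$ admitting no circle meeting it at two double points as in Fig.~\ref{rf6}; this settles one of the two clauses of the theorem in both directions, and it remains to prove that for a reduced $P$ with no two-point circle, (ii) holds if and only if $P$ carries a three-point circle of one of the types in Fig.~\ref{rf5}.

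For the ``only if'' part, suppose $r(P_{d_1})=1$. Applying Fact~\ref{f3} to the reduced projection $P_{d_1}$ gives a simple closed curve $\gamma$ meeting $P_{d_1}$ at exactly two double points $d_2,d_3$ in the Fig.~\ref{rf6} configuration; here $d_2,d_3$ are double points of $P$ distinct from $d_1$, since splicing neither creates nor destroys crossings other than the one spliced. I would then reinstate the crossing $d_1$. In a small disk $D$ around $d_1$ the projection $P_{d_1}$ is the pair of arcs of the non-Seifert smoothing, cutting $D$ into three regions, and $\gamma$ is disjoint from $P_{d_1}$ inside $D$; after isotoping $\gamma$ to minimize $|\gamma\cap\partial D|$ one checks that $\gamma\cap D$ is either empty or a single arc crossing the ``middle'' region. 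The empty case would make $\gamma$ a two-point circle for $P$, forcing $r(P)\le 1$, a contradiction; in the remaining case the middle region is exactly the complement of the crossing in $D$, so after a small isotopy $\gamma$ passes through the crossing point $d_1$. Thus $\gamma$ meets $P$ at precisely $d_1,d_2,d_3$, with the local smoothing-type prescribed at each: the Fig.~\ref{rf6} pattern of $P_{d_1}$ at $d_2,d_3$, and the reversed non-Seifert smoothing at $d_1$. Enumerating the ways the six arcs of $P$ joining $d_1,d_2,d_3$ can run on the two sides of $\gamma$ leaves finitely many configurations, and I would verify that each is one of the types drawn in Fig.~\ref{rf5}.

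For the ``if'' part, let $P$ be reduced with no two-point circle but with a three-point circle $\gamma$ of some Fig.~\ref{rf5} type, through double points $d_1,d_2,d_3$. Then $r(P)\ge 1$ and, by Fact~\ref{f3}, $r(P)\ne 1$, so $r(P)\ge 2$; in particular, via the recursion above, every $P_d$ is reduced. A type-by-type inspection of Fig.~\ref{rf5} shows that the smoothing of $d_1$ dictated by $\gamma$ is the non-Seifert one, and that in $P_{d_1}$ the curve $\gamma$ (slightly isotoped) meets the projection at the two double points $d_2,d_3$ in the Fig.~\ref{rf6} configuration. Hence $r(P_{d_1})=1$ by Fact~\ref{f3}, so $r(P)\le 1+r(P_{d_1})=2$, and therefore $r(P)=2$.

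The main obstacle is the pair of finite case analyses keyed to Fig.~\ref{rf5}: showing in the ``only if'' direction that every configuration obtained really appears in the figure, and checking in the ``if'' direction that for each listed type the distinguished smoothing at $d_1$ is non-Seifert and produces the Fig.~\ref{rf6} pattern. Both reduce to careful bookkeeping of which of the two smoothings at a double point is the non-Seifert one once the global connection pattern of the arcs --- the dotted arcs in the figures --- is fixed; it is precisely the patterns for which the relevant smoothings come out non-Seifert that populate the list in Fig.~\ref{rf5}.
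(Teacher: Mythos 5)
Your proposal is correct and follows essentially the same route as the paper: the paper also works backwards from Fact~\ref{f3}, applying the inverse non-Seifert operation $A$ to the Fig.~\ref{rf6} configuration and enumerating the finitely many ways the two new branches can attach to the dotted arcs (its Cases 1--3), which is exactly the ``reinstate the crossing $d_1$ and sort the resulting connection patterns'' step you describe. The case analysis you defer to bookkeeping is precisely what the paper's figures carry out, so the two arguments match in substance.
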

\begin{corollary}
If $r(P)=2$ of a knot projection $P$, its chord diagram has a sub-chord diagram \h.
\end{corollary}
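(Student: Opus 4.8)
The plan is to deduce the corollary from Theorem~\ref{r2}. Note first that $r(P)=2$ forces $P$ to be reduced, since a reducible knot projection has $r(P)=0$; hence Theorem~\ref{r2} applies and produces a circle $\gamma$ that meets $P$, transversally, in exactly three double points $p$, $q$, $s$, realizing one of the finitely many configurations of Fig.~\ref{rf5}. It therefore suffices to check, for each configuration in that figure, that the chord diagram of $P$ contains \h; one expects \h to appear already on the three chords $c_p$, $c_q$, $c_s$ dual to $p$, $q$, $s$.

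To extract the relevant chord structure I would mark, on the circle of the chord diagram of $P$, the six preimages of $p$, $q$, $s$, which cut that circle into six arcs. Here a short local observation helps: a smooth arc through a double point of an immersed curve, transverse to both branches, separates the four local branch germs into two cyclically adjacent pairs, and so separates the two germs of each branch. Applied at $p$, $q$, $s$, this shows that each of the six arcs is carried entirely into one of the two disks bounded by $\gamma$ and that consecutive arcs land in opposite disks. Combined with the dotted arcs in Fig.~\ref{rf5}, which record exactly how the branches at $p$, $q$, $s$ are joined along $P$, this pins down the cyclic order of the six endpoints, and hence the sub-chord diagram spanned by $c_p$, $c_q$, $c_s$, for each configuration.

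What remains is a finite, case-by-case inspection: for every type in Fig.~\ref{rf5}, write down the resulting three-chord sub-diagram and verify that it is \h, i.e.\ that one of the three chords crosses the other two while those two are disjoint. Carrying this out in every case exhibits \h inside the chord diagram of $P$ and proves the corollary. I expect the only delicate point to be this translation step: one must be sure that the cyclic order read off each picture is the correct one, and that no configuration in Fig.~\ref{rf5} instead produces three pairwise-crossing chords on $c_p$, $c_q$, $c_s$ --- a pattern which would force $r(P)\le 1$ and is excluded exactly by the first clause of Theorem~\ref{r2}. Once the configurations are redrawn as chord diagrams this verification is routine, but it must be carried out exhaustively.
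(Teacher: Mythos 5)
Your overall route is the one the paper intends: the corollary is meant to follow from Theorem~\ref{r2} by reading off, for each of the five configurations of Fig.~\ref{rf5}, the sub-chord diagram spanned by the three double points lying on the circle, and your translation mechanism is sound (the six arcs of $P$ cut off by the circle alternate between the two disks, and the dotted-arc connection data determines the cyclic order of the six endpoints, hence the three-chord sub-diagram). The soft spot is that the decisive step --- checking that \emph{every} one of the five types in Fig.~\ref{rf5} yields \h{} on those three chords --- is the entire content of the corollary, and you only assert its outcome (``carrying this out in every case exhibits \h'') without exhibiting the five three-chord diagrams. This could have been done blind by re-deriving the configurations exactly as in the proof of Theorem~\ref{r2} (applying the inverse splice $A$ to the two-double-point configuration of Fig.~\ref{rf6} in the cases $(A,B)$, $(A,C)$, $(C,D)$) and then computing the resulting cyclic orders; as written, the argument is a plan rather than a verification.

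The fallback reasoning you offer in place of that check is not valid: you claim that three pairwise-crossing chords on $c_p,c_q,c_s$ ``would force $r(P)\le 1$'' and are therefore excluded by the first clause of Theorem~\ref{r2}. No such implication is available in the paper or in the cited results: Fact~\ref{f3} and its corollary give $r(P)=1\Rightarrow$ existence of \tr, not the converse, and a circle meeting $P$ in three double points whose chords pairwise cross does not by itself produce a circle of the specific type shown in Fig.~\ref{rf6} (the arcs inside the two disks may carry arbitrarily many further double points, and not every circle through two double points realizes the Fig.~\ref{rf6} pattern). So the first clause of Theorem~\ref{r2} cannot be used to rule out a \tr-pattern; the only legitimate way to exclude it --- and to establish \h{} --- is the explicit inspection of the five configurations, which is precisely the step left undone.
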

We also introduce three new types of reductivities, and determine knot projections up to reductivity two for each of all the four reductivities.

%%%%%(2015.6.20Add)%%%%%%%%%%%%%%%%
Here, we state that it is worth considering these three reductivities.  As for Question~\ref{q1}, there is no doubt that it is geometrically important.  We regard Shimizu's formulation (Question~\ref{q2}) as capturing an aspect of Question~\ref{q1}.  If the answer to Question~\ref{q1} is yes, then Question~\ref{q2} obtains one of the necessary conditions.  
%Recall that as follows.  If Question~\ref{q2} is not true, disproving Shimizu's conjecture would be worth.   

\noindent$\bullet$ (1) If the answer to Question~\ref{q1} is yes, the answer to Question~\ref{q2} yes ($r \le 3$).  

In other words, if the answer to Question~\ref{q2} is no, disproving the conjecture for $r \le 3$ would be worth, i.e., (1)' is the contraposition of (1).  

\noindent$\bullet$ (1)'  If there exists a knot projection $P$ such that $r(P) \ge 4$, the answer to Question~\ref{q1} is no.

In this paper, we add another reductivity $t \le r$, giving another necessary condition.  The reductivity $t$ was introduced by Taniyama during the discussions with the authors (private communication).  
%Taniyama \cite{Tan} in the discussion by Taniyama and the authors.  

\noindent$\bullet$ (2) If the answer to Question~\ref{q1} is yes, $t \le 3$.  

Obviously, because (1) implies (2), (2) is easier to prove than (1) and is more accessible than Question~\ref{q2}.  Further, (2) is as important as (1) to obtain a negative answer to Question~\ref{q1}.    

To the best of our knowledge, an example with $t \ge 4$ and even $t \ge 3$, has not been found yet.  Here, we present our question (cf.~Question~\ref{q2}).

\begin{question}\label{q3}
Is it true that $t (P) \le 2$ for every knot projection $P$? 
\end{question}

Thus, it is important to explicitly obtain $P$ that satisfies $t (P) \le 2$.  Question~\ref{q3} is still open, but this study determines $P$ with $t(P) \le 2$.  To determine $t(P) \le 2$, we should determine $y(P) \le 2$ or $i(P) \le 2$, where $y$ and $i$ are two additional reductivities that are closely related to $r(P)$ and $t(P)$.  These reductivities have explicit relations with $t(P)$ (Proposition~\ref{prop1}), and thus their properties should be studied.    
%%%%%%%%%%%%%%%%%%%%%

This paper consists of the following sections: 
Sec.~\ref{sec_def} contains all four definitions of reductivities;
Sec.~\ref{sec_r1} describes knot projections with reductivity one for all four reductivities $t(P)$, $r(P)$, $y(P)$, and $i(P)$ for a knot projection $P$; Sec.~\ref{sec_lower} introduces easily calculated lower bounds; Sec.~\ref{sec_r2} contains a proof of Theorem~\ref{r2}; 
%that determining every knot projection $P$ with $r(P)=2$.  
Sec.~\ref{sec_t2} determines knot projections with reductivity two for each of the three cases $t(P)$, $y(P)$, and $i(P)$.   Finally, Sec.~\ref{sec_table} obtains a list of reductivities for prime reduced knot projections up to seven double points.   

\section{Four definitions of reductivities}\label{sec_def}
%In the following,
In Definition~\ref{def_four}, reductivity $r(P)$ is introduced by Shimizu \cite{S}.  The others, $t(P)$, $y(P)$, and $i(P)$, are new.  
\begin{definition}\label{def_four}
Let $P$ be a knot projection.  
A splice indicates either a Seifert splice (Fig.~\ref{rf4}) or a non-Seifert splice (Fig.~\ref{rf2}).  
\begin{itemize}
\item $t(P)$ $=$ min $\{$ number of splices, applied simultaneously, to obtain a reducible knot projection from $P$ $\}$,
\item $r(P)$ $=$ min $\{$ number of non-Seifert splices, applied recursively, to obtain a reducible knot projection from $P$ $\}$ (\cite{S}),
\item $y(P)$ $=$ min $\{$ number of non-Seifert splices, applied simultaneously, to obtain a reducible knot projection from $P$ $\}$,
\item $i(P)$ $=$ min $\{$ number of Seifert splices, applied simultaneously, to obtain a reducible knot projection from $P$ $\}$.
\end{itemize}
\begin{figure}[h!]
\includegraphics[width=3cm]{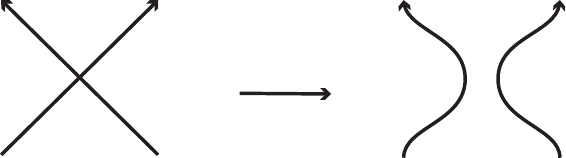}
\caption{A Seifert splice.  The splice preserves the orientation of a curve.}\label{rf4}
\end{figure}
\end{definition}
By the definition, we have the basic properties given in Proposition~\ref{prop1} below.  Note that the coherent $2$-gon referenced in the properties is $2$-gon shown in Fig.~\ref{coh}.
\begin{figure}[h!]
\includegraphics[width=2cm]{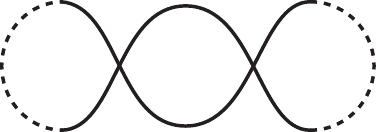}
\caption{A coherent $2$-gon.  Dotted arcs show the connections of branches of two double points of $2$-gons.}\label{coh}
\end{figure}  
\begin{proposition}\label{prop1}
Let $P$ be a reduced knot projection.  Then $P$ has the following properties.  
\begin{enumerate}
\item $t(P) \le r(P)$.
\item $t(P) \le y(P)$.
\item $t(P) \le i(P)$.
\item $i(P)$ is an even nonnegative integer.
\item If a reduced knot projection $P$ has at least one coherent $2$-gon, $i(P)=2$. 
\item If a reduced knot projection $P$ has at least one $C$-type $3$-gon, $i(P)=2$.
\item If a reduced knot projection $P$ has at least one $A$-type $3$-gon, $1 \le y(P) \le 2$.
\item For any even positive integer $2m$, there exists $P$ such that $i(P)=2m$. 
\end{enumerate}
\end{proposition}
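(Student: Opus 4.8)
The plan is to read off items (1)--(3) from the definitions, to deduce (4) from the component-parity of Seifert splices, and to obtain (5)--(8) by explicit splicing constructions. For (1): a recursive sequence of non-Seifert splices realising $r(P)$ takes place at $r(P)$ \emph{distinct} double points $c_1,\dots,c_{r(P)}$ of $P$, since a non-Seifert splice creates no new double point and, being the only smoothing of a double point that keeps a one-component curve connected, leaves a knot projection at every stage. Smoothings at distinct double points are local and commute, so the terminal diagram is $P$ with each $c_j$ replaced by one of its two smoothings, that is, by either the Seifert or the non-Seifert splice of $c_j$ in $P$; hence it is obtained from $P$ by a single round of simultaneous splices, which is an admissible configuration for $t$, and so $t(P)\le r(P)$. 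Items (2) and (3) are then immediate: a set of double points whose simultaneous non-Seifert (resp.\ Seifert) splicing yields a reducible knot projection is a special case of a set whose simultaneous splicing does so, so minimising gives $t(P)\le y(P)$ and $t(P)\le i(P)$.

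For (4), the Seifert splice is the orientation-coherent smoothing: at a self-crossing of a component it splits the component in two, and at a crossing of two components it merges them, so in every case it changes the number of components by exactly one. Performing $k$ Seifert splices simultaneously is the same as performing them one at a time (the oriented smoothing is well defined on the oriented curve at each stage), so the component count changes by an amount $\equiv k\pmod 2$; since $P$ and the terminal reducible diagram are both knot projections, $k$ is even --- and $\ge 2$, since $P$ is reduced. Items (5)--(7) are handled by naming the splices and checking reducibility against the branch-connection data of the figures. For (5): on a coherent $2$-gon with double points $c_1,c_2$, perform the Seifert splice at $c_1$ and at $c_2$; from Fig.~\ref{coh} one verifies that after the first splice $c_2$ is a crossing between the two resulting components, so the second splice reconnects them and the final diagram is a knot projection, and that this knot projection is reducible, giving $i(P)=2$. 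For (6): Seifert-splice two of the three double points of a $C$-type $3$-gon; the connection pattern that defines type $C$ in Fig.~\ref{rf0} is precisely what produces a reducing circle in the result, so $i(P)=2$. For (7): $y(P)\ge 1$ since $P$ is reduced, and $y(P)\le 2$ because a non-Seifert splice at one double point of an $A$-type $3$-gon either already creates a nugatory crossing (equivalently, $P$ carries a circle as in Fact~\ref{f3}), whence $y(P)=1$, or else a non-Seifert splice at a second double point of the $3$-gon produces a monogon, whence $y(P)=2$.

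For (8), take $P_m$ to be the standard $(2,2m+1)$-torus knot projection, i.e.\ the closure of the projected braid $\sigma_1^{2m+1}$, which is reduced. Every double point of $P_m$ meets two coherently oriented strands, so its unique Seifert splice is the ``vertical'' smoothing, and performing it simply deletes the corresponding letter from the braid word; hence the simultaneous Seifert splicing of any $j$ of its double points is, up to isotopy of $S^2$, the closure of $\sigma_1^{2m+1-j}$, irrespective of which $j$ double points are chosen. Such a closure is a knot projection exactly when $2m+1-j$ is odd, i.e.\ $j$ is even, and is reducible exactly when $2m+1-j\le 1$, i.e.\ $j\ge 2m$; therefore no fewer than $2m$ Seifert splices can make $P_m$ reducible, whereas $2m$ of them do (collapsing $\sigma_1^{2m+1}$ to $\sigma_1$), so $i(P_m)=2m$.

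The formal assertions (1)--(4) and the counting in (8) are routine; the substantive work is (5), and in the same spirit (6) and (7). The splices themselves are trivial to write down --- the difficulty, which I expect to be the main obstacle, is to confirm from the dotted-arc data alone that the spliced object is genuinely a knot projection (a Seifert splice can easily disconnect a curve) and that it contains a nugatory crossing; this is exactly the finite local case check that Fig.~\ref{coh} and Fig.~\ref{rf0} are arranged to organise.
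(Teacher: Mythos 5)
Your proposal is correct and follows essentially the same route as the paper: the paper's own proof only verifies (4), by the same component-parity observation about a Seifert splice, and (8), by the same $(2,2m+1)$-torus knot projections, treating (1)--(3) as definitional and (5)--(7) as immediate local checks against the figures, so your added detail (the commutation argument for (1), the explicit braid-word count for (8)) merely fills in what the paper leaves implicit. The one spot to tighten is (7): your phrasing is sequential (splice one crossing, then another), which as written bounds $r(P)$ rather than $y(P)$ --- for $y$ the two non-Seifert splices must be the smoothings designated in $P$ itself and applied simultaneously, with connectivity and the resulting $1$-gon checked from the $A$-type connection data in Fig.~\ref{rf0}, which is exactly the finite local verification you already flag and the paper likewise leaves to the figure.
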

\begin{proof}
It is sufficient to verify properties (4) and (8).  

For (4), if a single Seifert splice is applied to a knot projection, the result obtained is not a knot projection.   Property (8) can be verified if we consider $(2, 2m+1)$-torus knot projections.  Hence, the proposition is verified.  
\end{proof}
\begin{example}\label{ex1}
There is an example such that $t(P) \lneqq r(P)$.
\begin{figure}[h!]
\includegraphics[width=3cm]{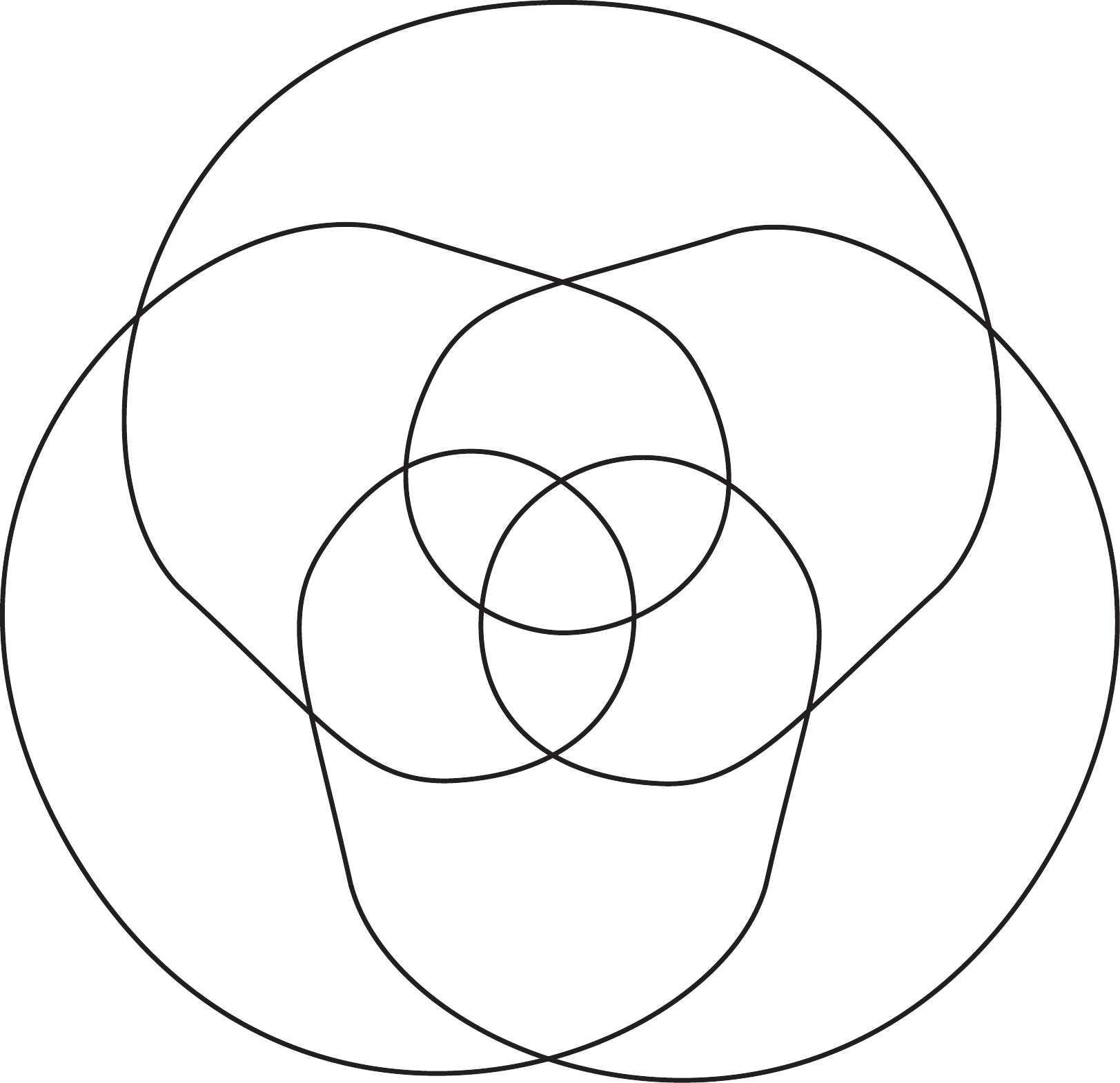}
\caption{Example with $t(P)=2$, $r(P)=3$, $y(P)=2$, and $i(P)=2$.}\label{exa1}
\end{figure}
\end{example}
\begin{proof}
\begin{figure}[h!]
\includegraphics[width=10cm]{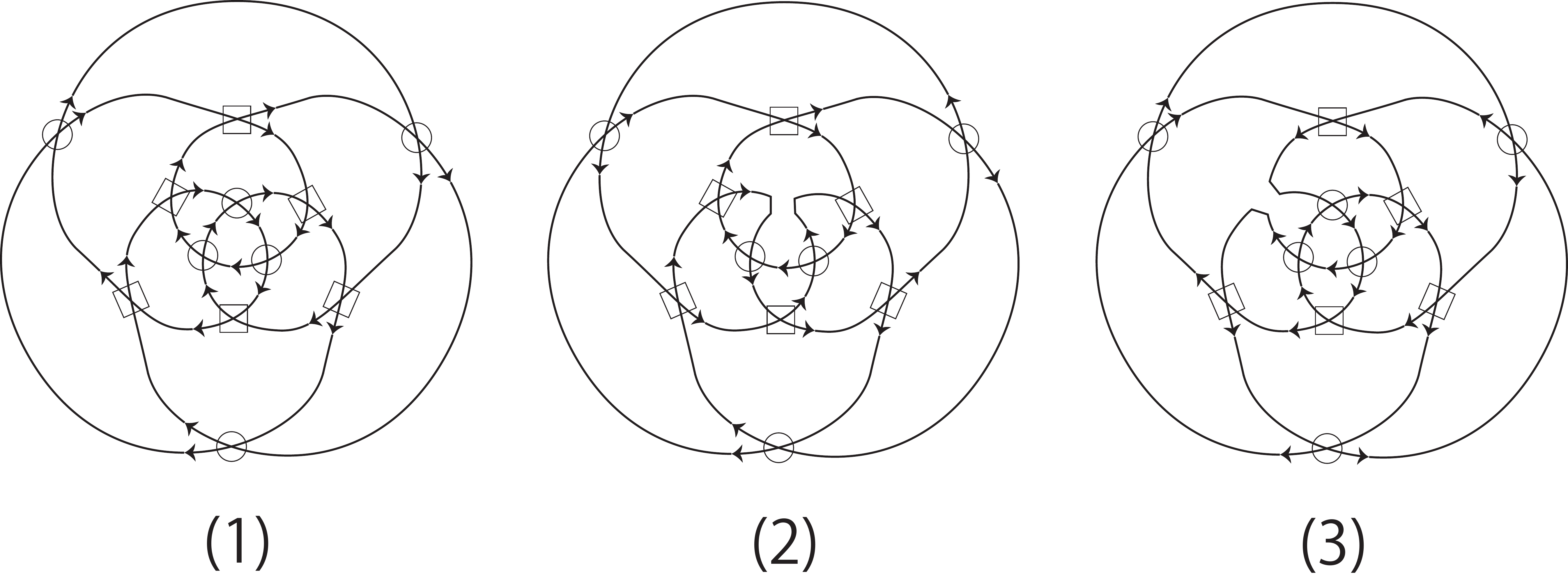}
\caption{Knot projections (1), (2) and (3).}\label{expr}
\end{figure}
From Fig.~\ref{expr} (1), it can be seen that due to its symmetry, $P$ has exactly two types of double points.  Thus, it can be easily seen that any single splice cannot produce a reducible knot projection.  Thus, $2 \le t(P), r(P), y(P),$ and $i(P)$. (More simply, because $|\tau(P)|=1$, we have $2 \le t(P), r(P), y(P), i(P)$ (Theorem~\ref{thm_lower}).)   From Fig.~\ref{expr} (2) and (3), we can easily notice that $t(P), y(P),$ and $i(P) = 2$.  However, from Fig.~\ref{expr} (2) and (3), we can see that $r(P) = 3$.  
\end{proof}
Intuitively, as in this case, we can see that the calculations for $t(P)$, $y(P)$, and $i(P)$ are simpler than those for $r(P)$.  This is one of the advantages of considering the reductivities $t(P)$, $y(P)$, and $i(P)$.  Similarly to Example~\ref{ex1}, we have Example~\ref{ex2}.  
\begin{example}\label{ex2}
There are examples such that $t(P) \lneqq y(P)$ and $r(P) \lneqq y(P)$.
\begin{figure}[h!]
\includegraphics[width=8cm]{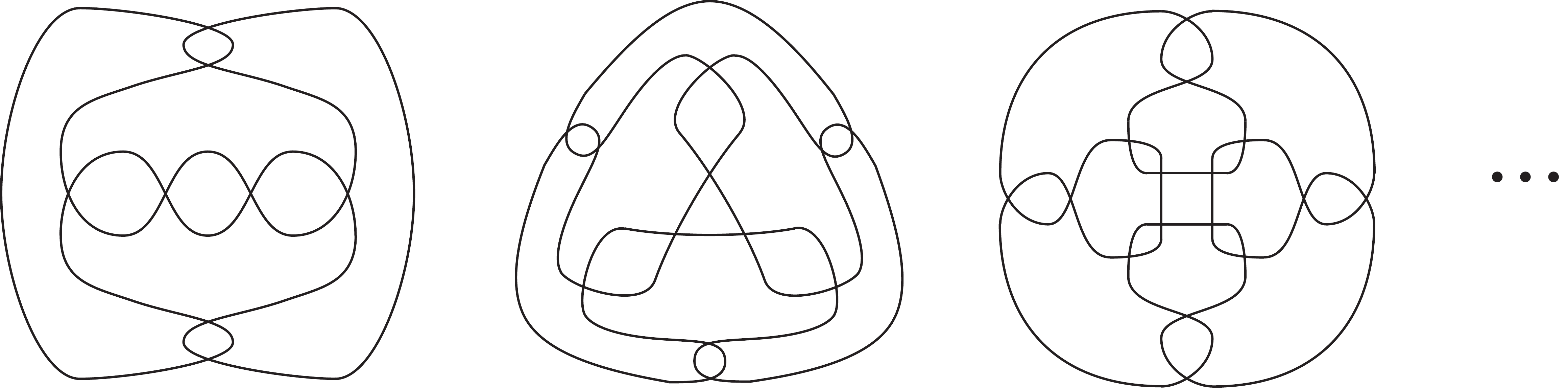}
\caption{Examples (infinite family) with $t(P)=2$, $r(P)=2$, and $y(P) \gneqq 2$ ($i(P)=2$).}\label{exa2}
\end{figure}
\end{example}
\section{Reductivity one}\label{sec_r1}
\begin{theorem}\label{r1}
For every reduced knot projection $P$, there exists a circle intersecting $P$ at just two double points of $P$, as shown in Fig.~\ref{rf6}, if and only if $t(P)=1$.  
%Every knot projection $P$ with $t(P)=1$ is resented as Fig.~\ref{rf6}.  
\end{theorem}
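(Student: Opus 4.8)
The plan is to deduce Theorem~\ref{r1} from Fact~\ref{f3} and Proposition~\ref{prop1}(1), the only additional ingredient being the local observation underlying Proposition~\ref{prop1}(4): a single Seifert splice, applied to a knot projection, never yields a knot projection again. Since the geometric condition ``there exists a circle intersecting $P$ at just two double points of $P$, as shown in Fig.~\ref{rf6}'' appears verbatim (with the same figure) in Fact~\ref{f3}, I shall freely identify it with the property ``$r(P)=1$''. Note also that a reduced knot projection has no nugatory crossing, so $t(P)\ge 1$ throughout.

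For the ``if'' direction, assume a circle as in Fig.~\ref{rf6} exists. Then $r(P)=1$ by Fact~\ref{f3}, hence $t(P)\le r(P)=1$ by Proposition~\ref{prop1}(1), and combining with $t(P)\ge 1$ gives $t(P)=1$.

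For the ``only if'' direction, assume $t(P)=1$: a single splice at some double point $d$ of $P$ produces a reducible knot projection $P'$, which in particular is a knot projection, i.e.\ the image of a single immersed circle. I claim the splice at $d$ cannot be a Seifert splice. Indeed, let $p,q\in S^1$ be the two preimages of $d$; removing them splits $S^1$ into two arcs $\alpha,\beta$, and the orientation-preserving (Seifert) reconnection at $d$ glues each of $\alpha$ and $\beta$ into its own loop, so the result has two components and is not a knot projection --- this is exactly the mechanism of Proposition~\ref{prop1}(4). Hence the splice at $d$ is a non-Seifert splice, so one non-Seifert splice turns $P$ into a reducible knot projection, i.e.\ $r(P)=1$; Fact~\ref{f3} then produces the desired circle.

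I do not expect a genuine obstacle: once the two-component claim for a single Seifert splice is made precise (which is immediate, being a purely local statement at $d$ combined with the global fact that the domain is a single circle), the theorem follows formally. Equivalently, one could phrase the ``only if'' step as: a single splice realizing $t(P)=1$ is either non-Seifert, giving $y(P)=1$, or Seifert, giving $i(P)=1$; but $i(P)$ is even by Proposition~\ref{prop1}(4), so $y(P)=1$, which is the same as $r(P)=1$ since for a single splice the ``simultaneous'' and ``recursive'' versions coincide.
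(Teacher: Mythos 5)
Your proposal is correct, but for the direction ``$t(P)=1\Rightarrow$ the circle of Fig.~\ref{rf6} exists'' you take a genuinely different route from the paper. The paper argues geometrically: it takes the splice at a double point $a$ producing a reducible $P'$ with nugatory crossing $b$, considers the simple closed curve through $a$ and $b$, lists the four possible connectivities of the arcs (Fig.~\ref{rf7}), discards two of them because $P$ has one component, and observes the surviving two coincide on $S^2$, yielding Fig.~\ref{rf6} directly. You instead observe that the single splice realizing $t(P)=1$ cannot be a Seifert splice (a Seifert splice at a self-crossing of a one-component curve closes the two preimage arcs into separate loops, so the result is not a knot projection --- the same mechanism the paper uses for Proposition~\ref{prop1}(4)), conclude $r(P)=1$, and then invoke Fact~\ref{f3} to produce the circle; the converse direction you prove exactly as the paper does, via $1\le t(P)\le r(P)=1$. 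Your argument is shorter and avoids the figure-based case analysis entirely, at the cost of leaning on Fact~\ref{f3} in both directions (so the geometric content is outsourced to \cite{ITtriple}); it also yields the equivalence $t(P)=1\Leftrightarrow y(P)=1\Leftrightarrow r(P)=1$ of Corollary~\ref{try_cor} essentially for free, whereas the paper's case analysis keeps the proof self-contained and exhibits the local picture explicitly. Both arguments are sound; just make sure the two-component claim for a single Seifert splice is stated as you did (two preimage points splitting $S^1$ into two arcs, each closed up by the orientation-preserving reconnection), since the paper's Proposition~\ref{prop1}(4) is stated for $i(P)$ rather than as a reusable lemma.
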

%Here, we equip an useful Lemma.
%\begin{lemma}
%For any two local arcs of knot projections, applying $A^{-1}$
%\end{lemma}
\begin{proof}
%First recall Fact~\ref{f3}.  
%By the definition of $t(P)$, we immediately observe that $t(P) \le r(P)$.  
\begin{itemize}
\item (If part) Consider the case $t(P)=1$.  Let $P'$ be a reduced knot projection obtained from $P$ by applying any splice at a double point, say $a$, of $P$, and let $b$ be a reducible crossing of $P'$.  Then, there exists a simple closed curve that intersects $P$ with $a$ and $b$ only.  There are exactly four cases with respect to the connectivity among the arcs at $a$ and $b$ as shown in Fig.~\ref{rf7}.  
\begin{figure}[h!]
\includegraphics[width=8cm]{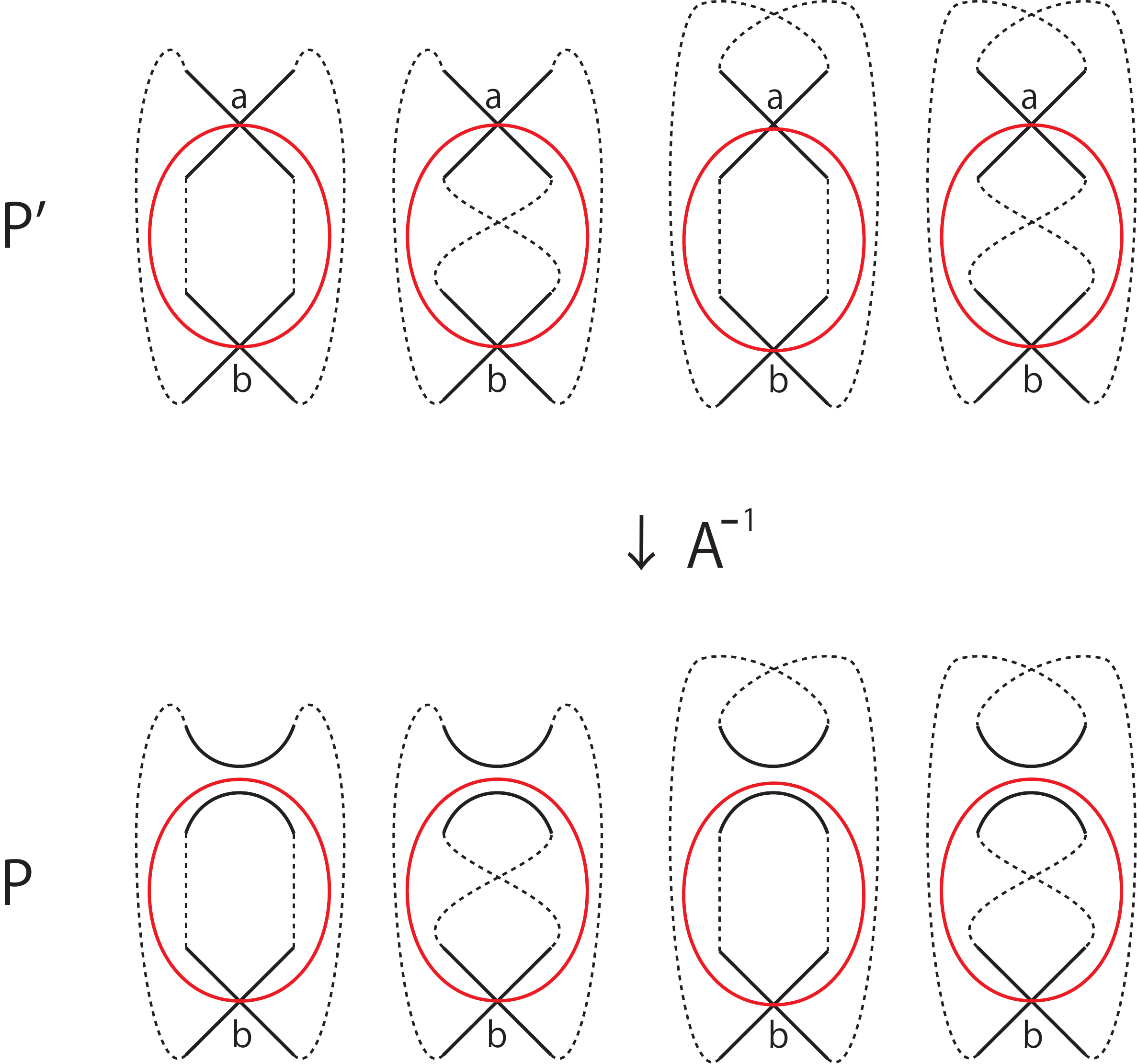}
\caption{All the possibilities for the distributions of arcs and the circle splitting the sphere into two disks before (upper) and after (lower) applying $A^{-1}$.}\label{rf7}
\end{figure}
Since $P$ is one component, the second and third cases remain.  However, the second and third cases are the same on $S^2$.    
\item (Only if part) Recall Fact~\ref{f3}.  By the definition of $t(P)$, we immediately observe that $t(P) \le r(P)$ (Proposition~\ref{prop1}).  If there exists a circle with two double points, as shown in Fig.~\ref{rf6}, this reduced knot projection $P$ satisfies $1 \le t(P) \le r(P)=1$.  Therefore, $t(P)=1$.  
%all possibility with $r(P)=1$
%\item (Only if part)
\end{itemize}
%For any two arcs having a common region, 
\end{proof}
%By the definition, 
\begin{theorem}
For every reduced knot projection $P$, there exists a circle intersecting $P$ at just two double points of $P$, as shown in Fig.~\ref{rf6}, if and only if $y(P)=1$.  
\end{theorem}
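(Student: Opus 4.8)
The plan is to reduce the statement to Fact~\ref{f3}, which already characterizes $r(P)=1$, by observing that for a reduced knot projection the conditions $y(P)=1$ and $r(P)=1$ coincide. The point is purely formal: a single splice ``applied recursively'' is literally the same move as a single splice ``applied simultaneously'', and both $r$ and $y$ are defined using the non-Seifert splice $A^{-1}$; hence $r(P)=1$ if and only if $y(P)=1$. Once this is recorded, the theorem is exactly Fact~\ref{f3}.

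For the ``only if'' direction I would argue as follows. Assume $y(P)=1$. Then there is a double point $a$ of $P$ such that applying $A^{-1}$ at $a$ yields a reducible knot projection $P'$ with a nugatory crossing $b$. Since this is a single move, it is also a valid recursive application, so $r(P)\le 1$; as $P$ is reduced, $r(P)\ge 1$, whence $r(P)=1$ and Fact~\ref{f3} produces a circle meeting $P$ in exactly the two double points $a$ and $b$, which after an isotopy on $S^2$ is the picture of Fig.~\ref{rf6}. Alternatively one can give the direct argument used in the proof of Theorem~\ref{r1}: enumerate the four possibilities for how the four arcs emanating from $a$ and $b$ are connected relative to the splitting circle (Fig.~\ref{rf7}, whose lower row already depicts $A^{-1}$), use the fact that $P$ has a single component to discard the first and fourth cases, and note that the remaining two cases agree on $S^2$, giving Fig.~\ref{rf6}.

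For the ``if'' direction, assume such a circle exists. By Fact~\ref{f3} we have $r(P)=1$, so a single non-Seifert splice produces a reducible knot projection; since there is only one splice, applying it ``simultaneously'' gives $y(P)\le 1$, and $y(P)\ge 1$ because $P$ is reduced, so $y(P)=1$.

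There is essentially no obstacle here: the only thing needing care is the bookkeeping that ``one splice, recursively'' and ``one splice, simultaneously'' are the same and that $r$ and $y$ use the same splice type, so that $r(P)=1\iff y(P)=1$. With that observation the result is immediate from Fact~\ref{f3} (one could even fold both this theorem and Theorem~\ref{r1} into a single statement, since $t(P)=1\iff r(P)=1\iff y(P)=1$ for reduced $P$).
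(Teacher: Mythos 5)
Your proof is correct, and it takes a slightly different (though equally trivial) route than the paper. The paper derives the direction $y(P)=1\Rightarrow$ circle by using $t(P)\le y(P)$ (Proposition~\ref{prop1}(2)) to conclude $t(P)=1$ and then invoking its own Theorem~\ref{r1}, the characterization of $t(P)=1$ proved by the arc-connectivity case analysis of Fig.~\ref{rf7}; the converse direction is dismissed as obvious. You instead shortcut through $r$: since $r$ and $y$ both count non-Seifert splices and a single splice ``applied recursively'' is literally the same move as a single splice ``applied simultaneously,'' one has $r(P)=1\iff y(P)=1$ for reduced $P$, and the statement becomes exactly Fact~\ref{f3}. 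Your bookkeeping in both directions (minimum $\ge 1$ because $P$ is reduced, $\le 1$ because the single move is valid under either reading) is sound, and your fallback direct argument correctly mirrors the proof of Theorem~\ref{r1} (discarding the disconnected cases and identifying the two remaining ones on $S^2$). What each approach buys: your reduction needs only the externally quoted Fact~\ref{f3} and avoids both Theorem~\ref{r1} and the inequality $t\le y$, so it is the most economical statement of why the equivalence holds; the paper's route keeps everything internal to its chain $t\le y$ and to Theorem~\ref{r1}, which it has just proved, and in effect records the stronger observation (your closing remark as well) that $t(P)=1$, $r(P)=1$, and $y(P)=1$ are all equivalent, which the paper then states as Corollary~\ref{try_cor}.
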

\begin{proof}
\begin{itemize}
\item (If part) Consider a reduced knot projection $P$ with $y(P)=1$.  By the definition, $t(P) \le y(P)$ and $P$ satisfies $t(P) = 1$.  By Theorem~\ref{r1}, for $P$, there exists a circle with two double points, as shown in Fig.~\ref{rf6}.  
\item (Only if part) Obvious.  
%We assume that $P$ is a reduced knot projection and has two double points with a circle, as shown in Fig.~\ref{rf6}.  In this case, we have $y(P)=1$.  
\end{itemize}
\end{proof}
\begin{corollary}\label{try_cor}
Let $P$ be a knot projection.   The following conditions are mutually equivalent.   

\noindent $(1)$ $t(P)=1$.

\noindent $(2)$ $r(P)=1$.

\noindent $(3)$ $y(P)=1$.

%\[t(P)=1 \Leftrightarrow r(P)=1 \Leftrightarrow y(P)=1.\]
\end{corollary}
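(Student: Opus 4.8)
The plan is to show that each of the three conditions $(1)$, $(2)$, $(3)$ is equivalent to one and the same geometric property of $P$, which I will label $(\star)$: the existence of a simple closed curve intersecting $P$ at exactly two double points in the pattern of Fig.~\ref{rf6}. First I would separate off the degenerate case in which $P$ is not reduced. If $P$ already has a nugatory crossing, then the empty family of splices already turns $P$ into a reducible knot projection, so $t(P)=r(P)=y(P)=0$; in that case conditions $(1)$, $(2)$, $(3)$ all fail, and the equivalence holds vacuously. Thus it suffices to treat a reduced $P$, which is precisely the setting of Fact~\ref{f3}, Theorem~\ref{r1}, and the theorem stated immediately above.

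For reduced $P$, Theorem~\ref{r1} gives $t(P)=1 \iff (\star)$, Fact~\ref{f3} gives $r(P)=1 \iff (\star)$, and the preceding theorem gives $y(P)=1 \iff (\star)$; chaining these equivalences through $(\star)$ yields the asserted three-way equivalence. If one prefers to make the logical flow explicit, the argument may instead be run through Proposition~\ref{prop1}: since $t(P)\le r(P)$ and $t(P)\le y(P)$, either $r(P)=1$ or $y(P)=1$ forces $t(P)\le 1$, and $t(P)\ne 0$ because a reduced knot projection has no nugatory crossing, so $t(P)=1$; conversely, $t(P)=1$ produces the curve $(\star)$ via the ``if part'' of Theorem~\ref{r1}, and then the ``if'' directions of Fact~\ref{f3} and of the preceding theorem return $r(P)=1$ and $y(P)=1$. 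Either route gives $(1)\Leftrightarrow(2)\Leftrightarrow(3)$.

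I do not expect a real obstacle: the substance has already been supplied by the three cited results, and the corollary is essentially bookkeeping. The one point deserving care is that the corollary is stated for an arbitrary knot projection while its three inputs assume $P$ reduced, so the non-reduced case must be acknowledged and disposed of separately — which is immediate, since then each reductivity in question equals $0$ rather than $1$.
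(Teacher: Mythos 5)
Your argument is correct and is essentially the paper's own (implicit) proof: the corollary follows at once by chaining Fact~\ref{f3}, Theorem~\ref{r1}, and the theorem for $y(P)=1$ through the common condition of a simple closed curve meeting $P$ in exactly two double points as in Fig.~\ref{rf6}. Your separate disposal of the non-reduced case (where all three reductivities vanish, so the equivalence is vacuous) is a reasonable extra bit of care that the paper leaves unstated.
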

\begin{corollary}
Let $P$ be a reduced knot projection.  

\noindent $(1)$ If $r(P)=2$, then $t(P)=2$.  

\noindent $(2)$ If $y(P)=2$, then $t(P)=2$.
\end{corollary}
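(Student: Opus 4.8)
The plan is to deduce both implications directly from the monotonicity inequalities of Proposition~\ref{prop1} together with the equivalences collected in Corollary~\ref{try_cor}. For part~(1), assume $r(P)=2$. Proposition~\ref{prop1}(1) gives $t(P)\le r(P)=2$, so it remains to rule out $t(P)=0$ and $t(P)=1$. The case $t(P)=0$ cannot occur because $t(P)=0$ would mean that $P$ itself is reducible, contradicting the hypothesis that $P$ is a reduced knot projection. The case $t(P)=1$ is excluded by Corollary~\ref{try_cor}: if $t(P)=1$, then $r(P)=1$, contradicting $r(P)=2$. Hence $t(P)=2$.

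For part~(2), the argument is the mirror image, using Proposition~\ref{prop1}(2) in place of (1): assuming $y(P)=2$, we get $t(P)\le y(P)=2$; the value $t(P)=0$ is impossible since $P$ is reduced; and $t(P)=1$ would force $y(P)=1$ by Corollary~\ref{try_cor}, contradicting $y(P)=2$. Therefore $t(P)=2$.

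I expect no real obstacle here: the statement is a formal consequence of results already established in the excerpt (Proposition~\ref{prop1}, Theorem~\ref{r1}, and Corollary~\ref{try_cor}), and the only point requiring a word of care is observing that a reduced knot projection has $t(P)\ge 1$, which is immediate from the definition of ``reduced'' (no nugatory crossing) and of $t(P)$ (minimum number of splices needed to produce a reducible projection).
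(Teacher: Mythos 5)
Your argument is correct and is exactly the reasoning the paper leaves implicit: the corollary follows from Proposition~\ref{prop1}(1)--(2), the equivalence $t(P)=1 \Leftrightarrow r(P)=1 \Leftrightarrow y(P)=1$ of Corollary~\ref{try_cor}, and the observation that a reduced projection has $t(P)\ge 1$. Nothing is missing and no different route is taken.
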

\begin{remark}
There is no knot projection with $i(P)=1$ (see Proposition~\ref{prop1}~(4)).
\end{remark}
\section{Lower bounds for reductivities.}\label{sec_lower}
In this short section we show that {\it{circle numbers}}, first introduced in \cite{ITcircle}, are useful to obtain a lower bound of $r(P)$, $t(P)$, and $y(P)$.  
\begin{definition}[circle number \cite{ITcircle}]
A circle number $|\tau(P)|$ of a knot projection $P$ is the number of circles that result from applying splice $A^{-1}$ to every double point of $P$ simultaneously.  
\end{definition}
\begin{theorem}\label{thm_lower}
Let $P$ be a reduced knot projection.  If $|\tau(P)|=1$, then
\[2 \le t(P), 2 \le r(P),~{\text{and}}~2 \le y(P).\]
\end{theorem}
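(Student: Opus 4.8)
The plan is to argue by contradiction, using the fact (already established via Corollary~\ref{try_cor}) that $t(P)=1$, $r(P)=1$, and $y(P)=1$ are mutually equivalent, so it suffices to rule out $t(P)=1$. So suppose $t(P)=1$. By Corollary~\ref{try_cor} we then also have $r(P)=1$, and by Fact~\ref{f3} there is a simple closed curve $\gamma$ on $S^2$ meeting $P$ transversally at exactly two double points $a$ and $b$, with the local picture of Fig.~\ref{rf6}. The curve $\gamma$ separates $S^2$ into two disks $D_1$ and $D_2$; the plan is to analyze how $P$ is distributed across this separation and to track what happens to the number of components when one applies $A^{-1}$ everywhere.

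First I would record the structure near $a$ and $b$: at each of these two double points, two of the four emanating branches lie in $D_1$ and two lie in $D_2$, and the connection pattern inside Fig.~\ref{rf6} tells us which branch-ends are joined to which. The key observation is that $P$ restricted to $D_1$ is a disjoint union of arcs whose endpoints are the four intersection points $\gamma \cap P$ (two at $a$, two at $b$), and similarly for $D_2$; since $P$ is connected, these arcs must glue up around $\gamma$ in the way dictated by Fig.~\ref{rf6}. Next I would compute $|\tau(P)|$ by performing the $A^{-1}$ splice at $a$ and at $b$ first, and only afterwards at the remaining double points. Splicing $A^{-1}$ at the two double points lying on $\gamma$ — because of the specific branch-connectivity of Fig.~\ref{rf6} that makes these two crossings "reducing" in the recursive sense — has the effect of cutting $\gamma$ loose as (part of) a separate circle, or more precisely of disconnecting the part of $P$ inside $D_1$ from the part inside $D_2$. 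Once $P$ is split into an $D_1$-part and an $D_2$-part that no longer communicate, applying $A^{-1}$ to all the remaining double points can only keep these two pieces separate, so the total number of resulting circles is at least $2$. Hence $|\tau(P)| \ge 2$, contradicting $|\tau(P)|=1$.

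Concretely, the cleanest way to make the middle step rigorous is to reuse the case analysis of Fig.~\ref{rf7} from the proof of Theorem~\ref{r1}: the configuration of Fig.~\ref{rf6} is exactly the "surviving" case there (the second/third case, which coincide on $S^2$), and in that case one checks directly from the local before/after pictures that the $A^{-1}$-splice at $a$ and at $b$ replaces the portion of the diagram near $\gamma$ by two parallel strands on either side, so that $\gamma$ bounds a region on each side of which $P$ now lives independently. Then $|\tau(P)|$, computed by splicing everything, factors as a sum over the two sides, each contributing at least one circle.

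The main obstacle I anticipate is the middle step: making precise, without hand-waving, the claim that the two double points on $\gamma$ "disconnect the two sides" after the $A^{-1}$ splice, and that this disconnection cannot be undone by the splices at the other crossings (all of which lie strictly inside $D_1$ or strictly inside $D_2$ and therefore cannot reconnect the two sides). This is intuitively clear because splices performed inside a disk never change which boundary circle a strand-end belongs to, but writing it carefully requires keeping track of the arc/endpoint bookkeeping on $\gamma$. The remaining steps are essentially the local picture manipulations already used for Theorem~\ref{r1} together with the equivalence in Corollary~\ref{try_cor}.
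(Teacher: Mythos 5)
Your proposal is correct and follows essentially the same route as the paper: reduce everything to $t(P)$ via the inequalities $t\le r$ and $t\le y$ (equivalently Corollary~\ref{try_cor}), then observe that $t(P)=1$ forces the configuration of Fig.~\ref{rf6}, which in turn forces $|\tau(P)|\ge 2$. The only difference is that the paper asserts the last implication directly from the figure, whereas you spell out the splice-at-$a$-and-$b$ disconnection argument that justifies it.
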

\begin{proof}
From Proposition~\ref{prop1} (1) and (2), it is clear that it is sufficient to show that $2 \le t(P)$ if $|\tau(P)|=1$.  Fig.~\ref{rf6} implies that $2 \le |\tau(P)|$ if $t(P)=1$.
\end{proof}
\begin{example}
See Figs.~\ref{exa1} and \ref{exa2}.
\end{example}
\begin{remark}
Known upper bounds are shown in Table \ref{table0} (see \cite{S} or Proposition~\ref{prop1}).  For example, if a reduced knot projection $P$ has at least one coherent $2$-gon, $t(P) \le 2$. 
\begin{table}[h!]
\includegraphics[width=5cm]{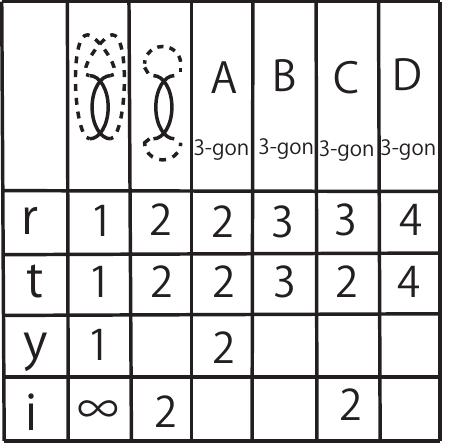}
\caption{Known upper bounds.  Blank cells denote unknown information.}\label{table0}
\end{table}
\end{remark}
\section{Shimizu's reductivity two}\label{sec_r2}
In this section, we prove Theorem~\ref{r2}.  The inverse operation $A$ of the non-Seifert splice $A^{-1}$ is the operation shown in Fig.~\ref{rf9}.  

\noindent{\it{Proof of Theorem~\ref{r2}.}}
%(See Figs.~\ref{rf8} and \ref{rf10}.)
\begin{figure}[h!]
\includegraphics[width=5cm]{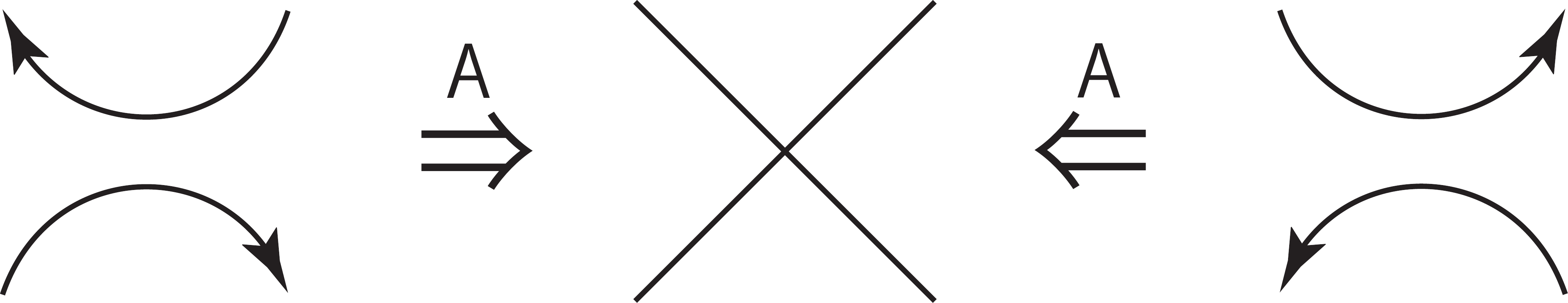}
\caption{Inverse replacements of a non-Seifert splice $A^{-1}$ when an arbitrary orientation is obtained.}\label{rf9}
\end{figure}
Applying an $A$ to the knot projection in Fig.~\ref{rf8} on the dotted two arcs produces the following three cases.   (Note that in Fig.~\ref{rf10}, we chose an orientation of a knot projection, but another choice of the orientation implies the same conclusion for unoriented knot projections.)  
\begin{figure}[h!]
\includegraphics[width=2cm]{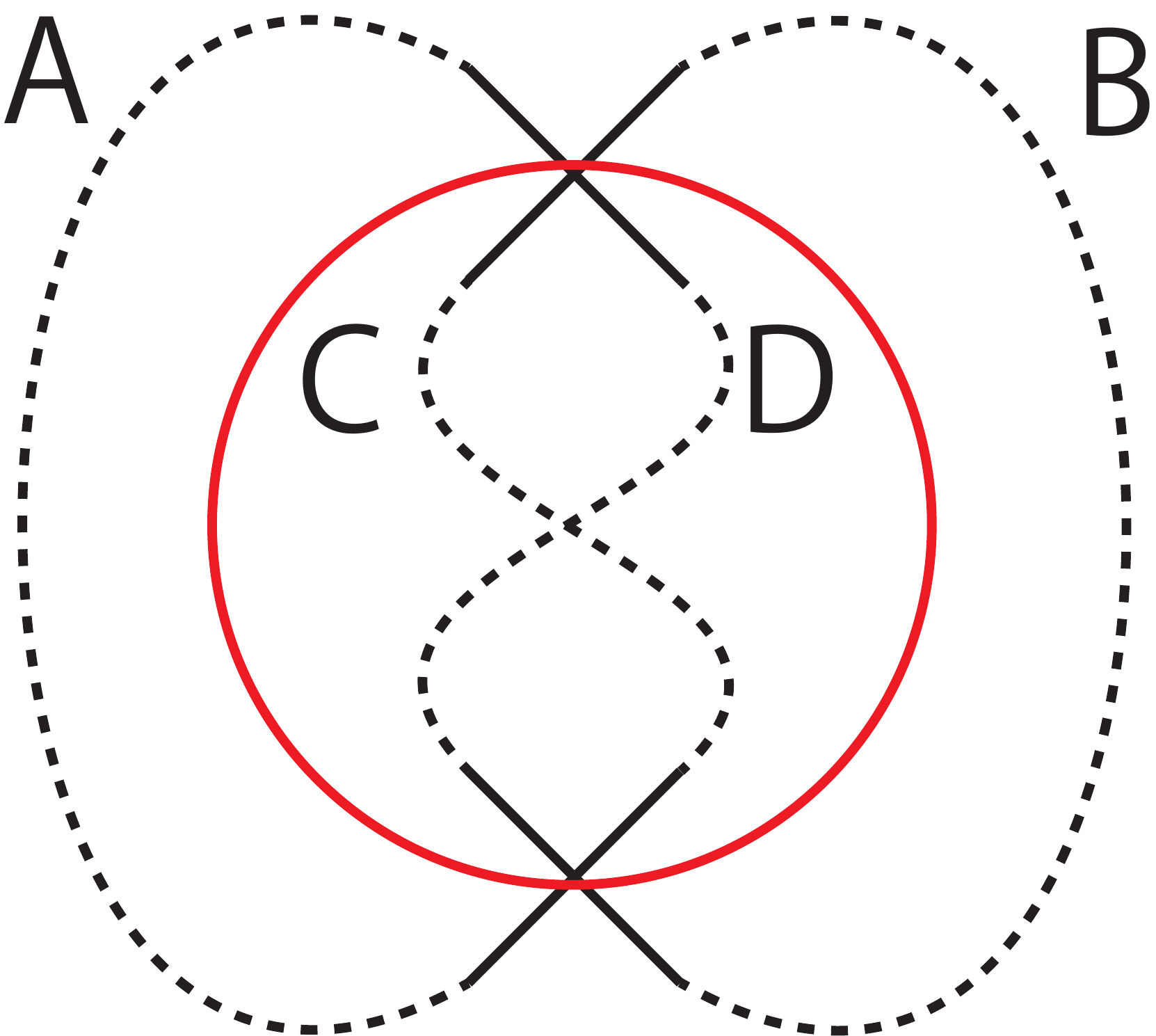}
\caption{Dotted arcs $A$, $B$, $C$, and $D$.}\label{rf8}
\end{figure}
\begin{figure}[h!]
\begin{tabular}{|c|c|c|c|} \hline
Case 1 & Case 2-(i) & Case 2-(ii) & Case 3 \\ \hline
\includegraphics[width=2.5cm]{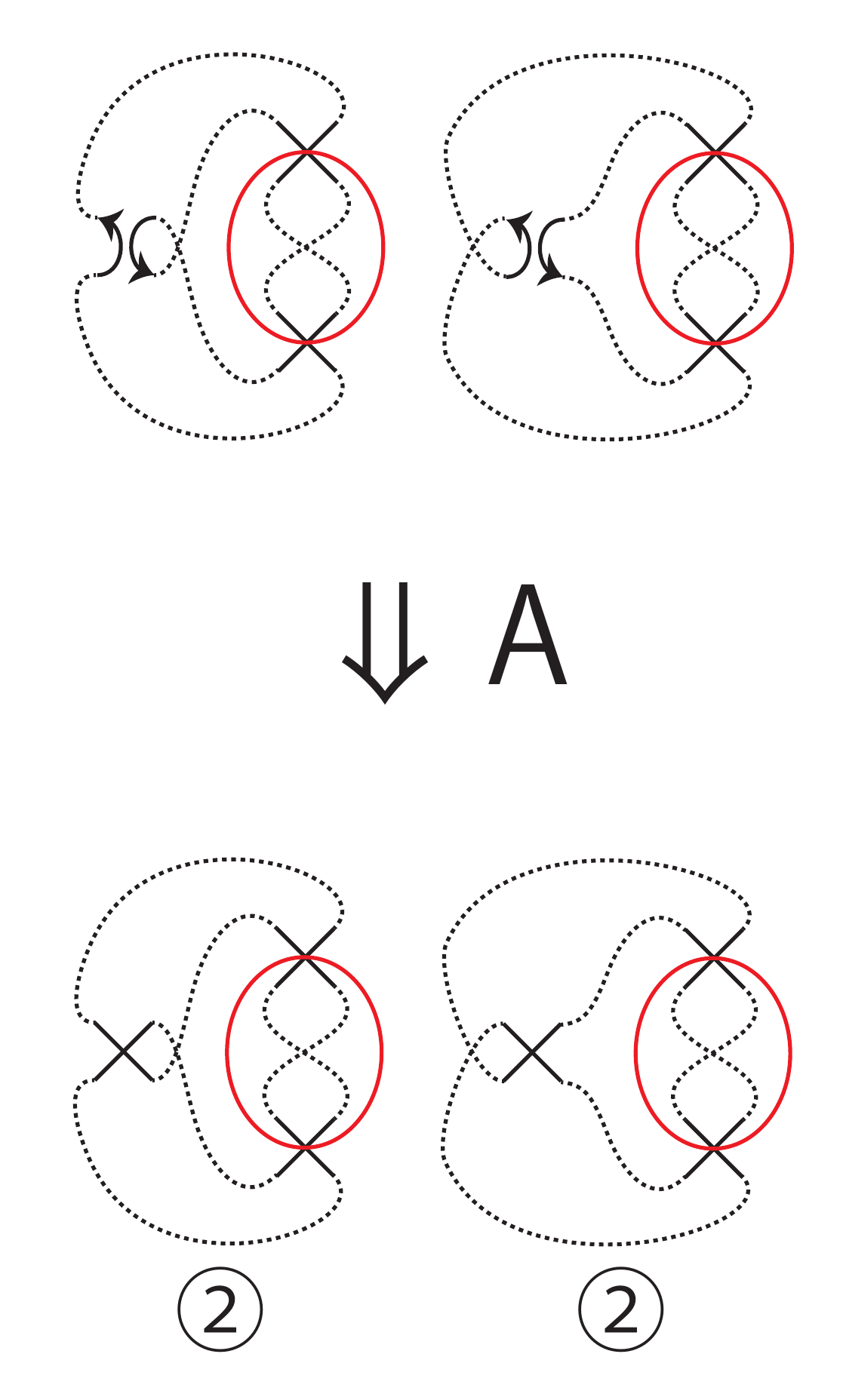}&
\includegraphics[width=2.5cm]{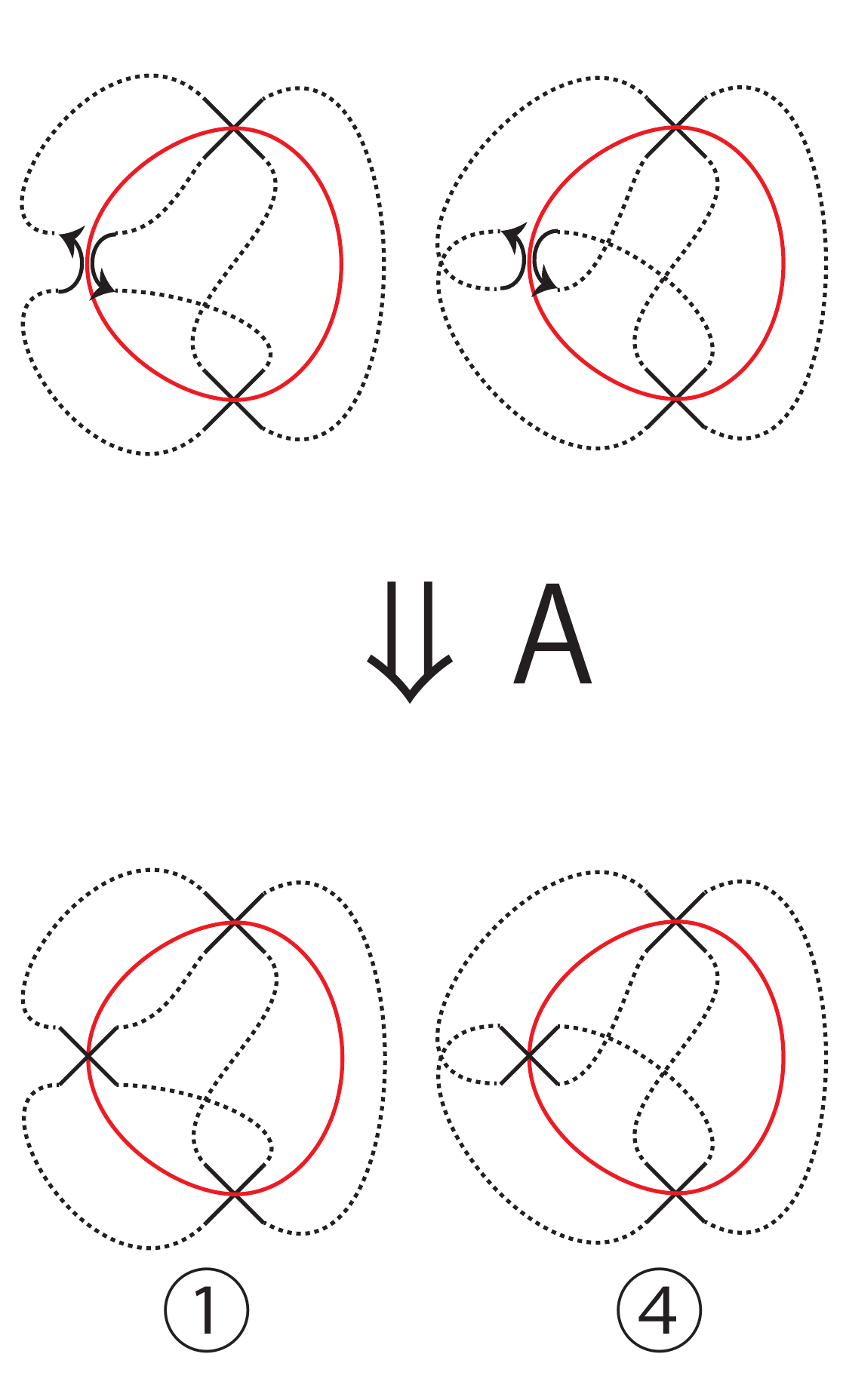}&
\includegraphics[width=2.5cm]{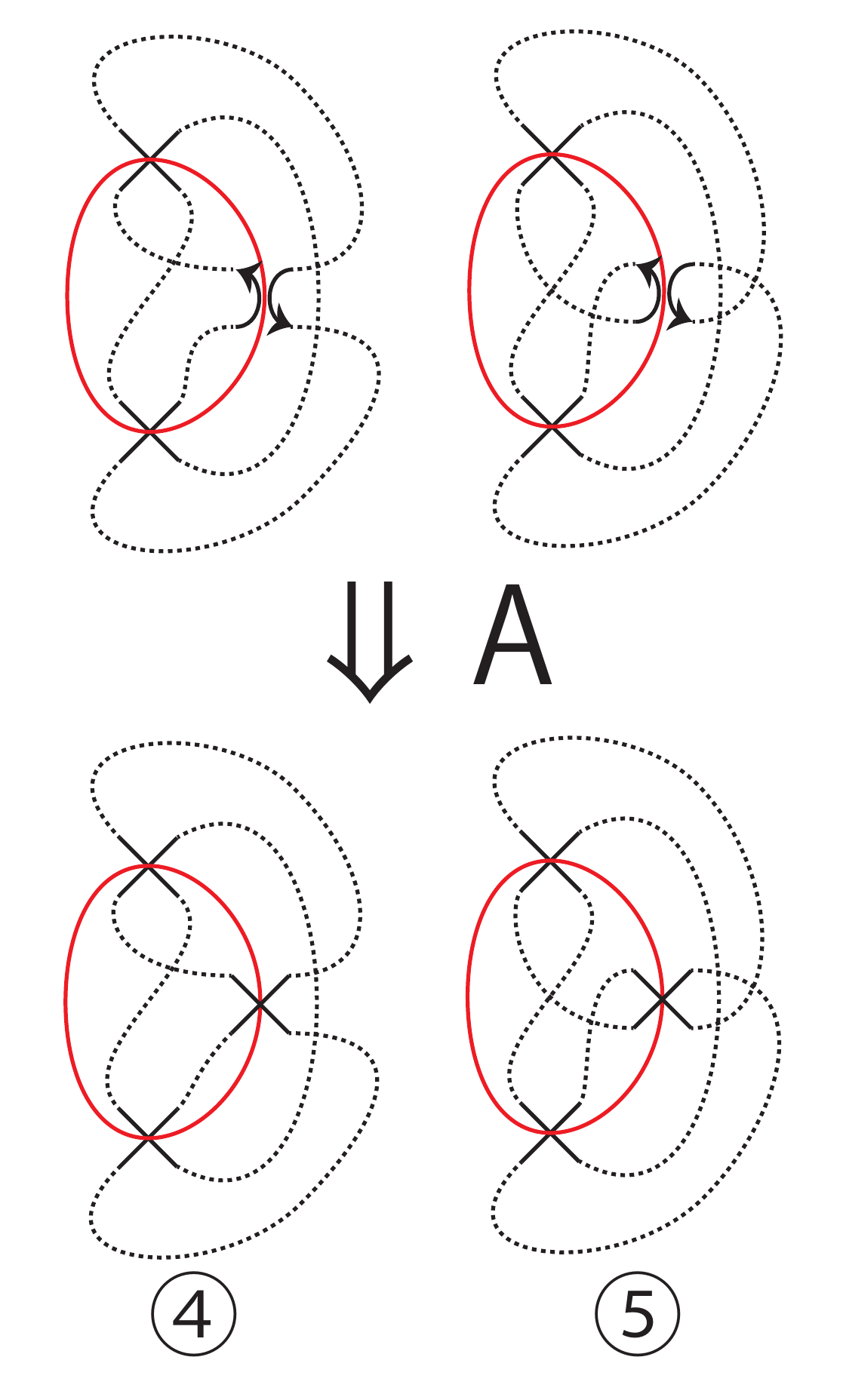}&
\includegraphics[width=2.5cm]{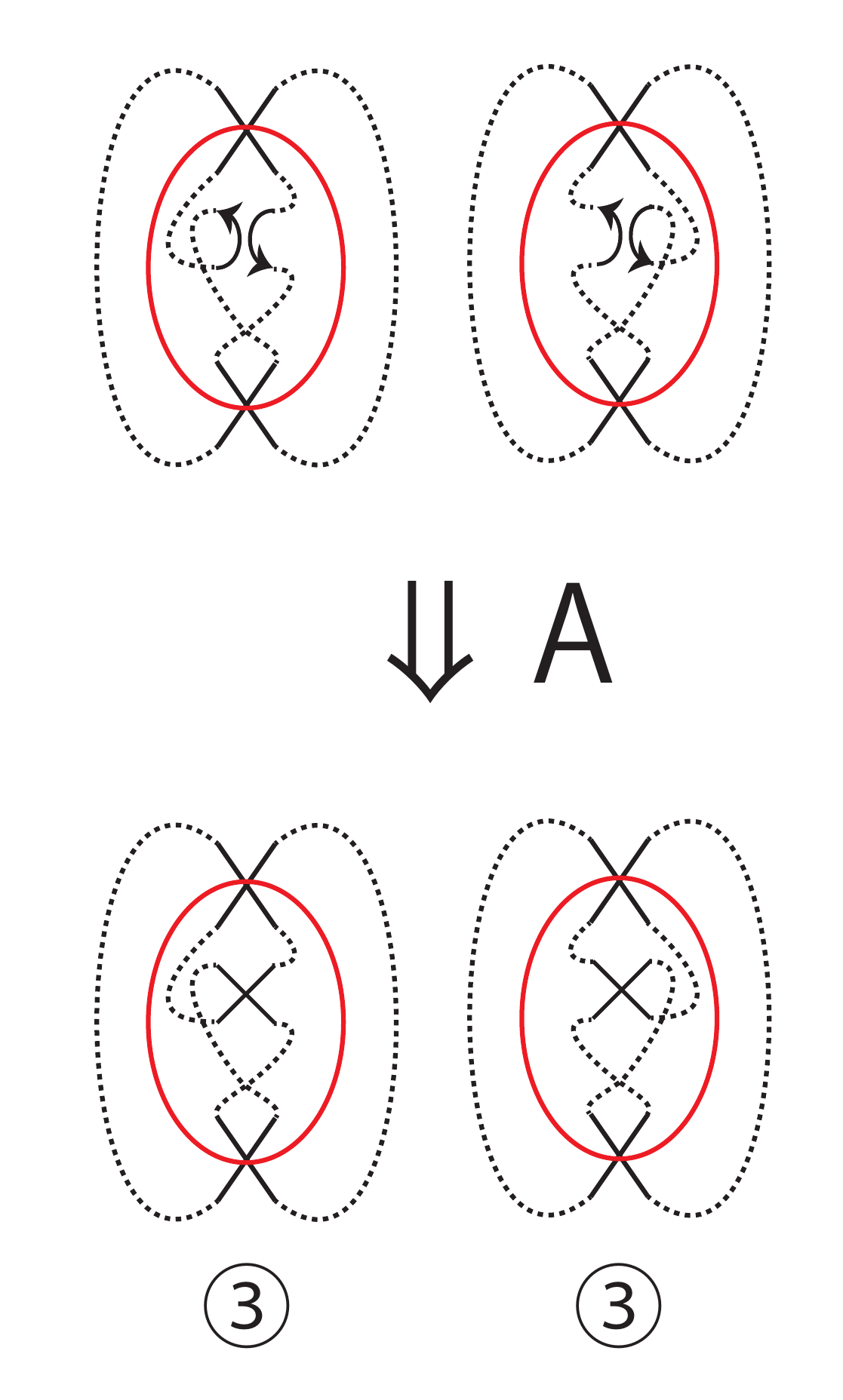}\\ \hline
\end{tabular}
\caption{Cases 1--3.}\label{rf10}
\end{figure}
\begin{itemize}
\item Case 1: Connect dotted arcs A and B. In this case, the second numbered knot projections are obtained (Fig.~\ref{rf10}).
\item Case 2: Connect dotted arcs A and C.  Pairs (A, D), (B, C), and (B, D) return the case (A, C).  (Note that each mirror image of every knot projection in Fig.~\ref{rf5} is the same as itself).  In this case, the the first, fourth, or fifth numbered knot projections are obtained (Fig.~\ref{rf10}).  
\item Case 3: Connect dotted arcs C and D.  In this case, the third numbered knot projections are obtained (Fig.~\ref{rf10}). \end{itemize}
\hfill$\Box$
%\end{proof}
\section{Other reductivities two}\label{sec_t2}
In this section, we determine knot projections with $i(P)$, $y(P)$, or $t(P)$ $\le 2$.  The inverse operation of a non-Seifert splice (resp.~Seifert splice) is denoted by $A$ (resp.~$B$).  
%Throughout this section, to prove claims, 
When proving claims in this section, we used an orientation of every knot projection for presenting an unoriented knot projection.  This orientation eliminates confusion.  Since a knot projection is a one-component curve, it is easy to find an unoriented knot projection by simply omitting the orientation.  Thus, references to the orientations of figures may be unnecessary in the proofs of Theorems~\ref{i_kiyakudo2_thm}, \ref{y_kiyakudo2_thm}, and \ref{t_kiyakudo2_thm}.  
Note that we do not distinguish between a knot projection and its mirror image in this section.  
%, we omit orientations of figures.  
% but the selection of this orientation does not depends on each unoriented knot projection

Based on Fig.~\ref{rf11}, we consider all possible connections of two local places.
\begin{figure}[h!]
\includegraphics[width=5cm]{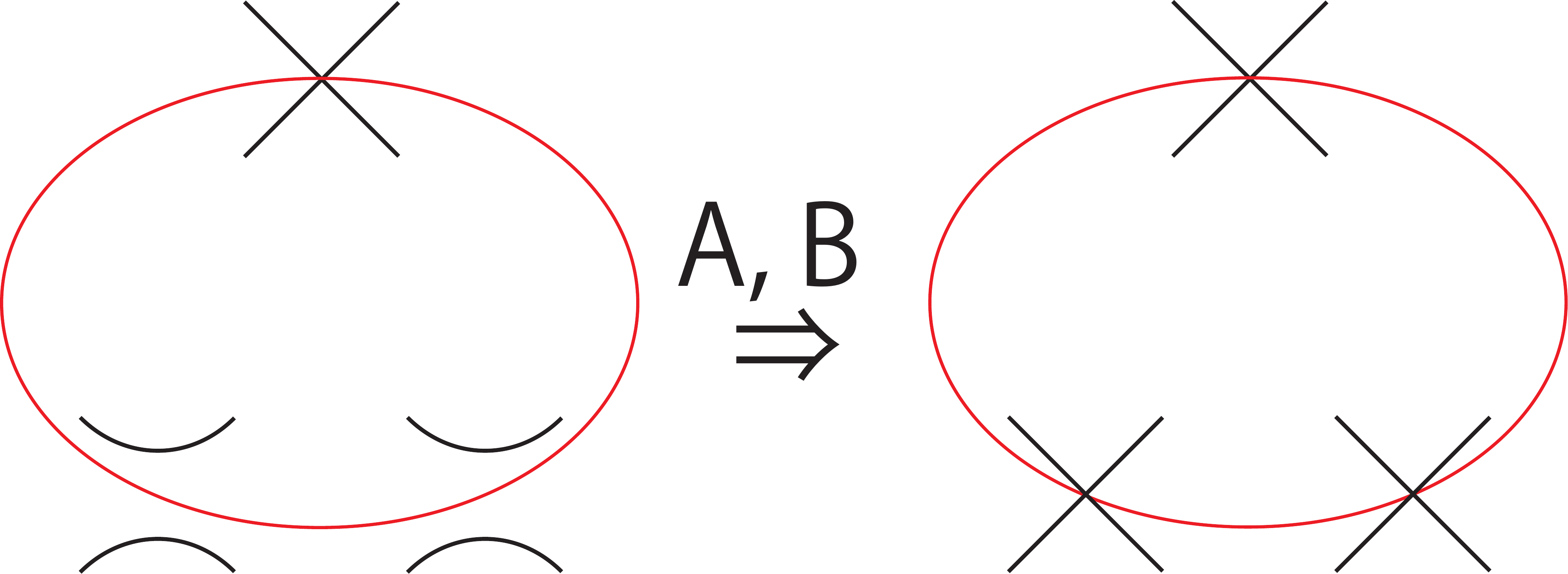}
\caption{Inverse moves $A$ or $B$ to consider all possibilities of producing a reducible knot projection after applying Seifert splice $A^{-1}$ or non-Seifert splice $B^{-1}$.}\label{rf11}
\end{figure}
When we choose $B$ from these possibilities, and we consider Fig.~\ref{rf12b}, we have Theorem~\ref{i_kiyakudo2_thm}.   
\begin{theorem}\label{i_kiyakudo2_thm}
Let $P$ be a reduced knot projection.  There exists a circle intersecting $P$ at just two or three double points of $P$, as shown in Fig.~\ref{ito}, if and only if $i(P)=2$.  
\begin{figure}[h!]
\includegraphics[width=8cm]{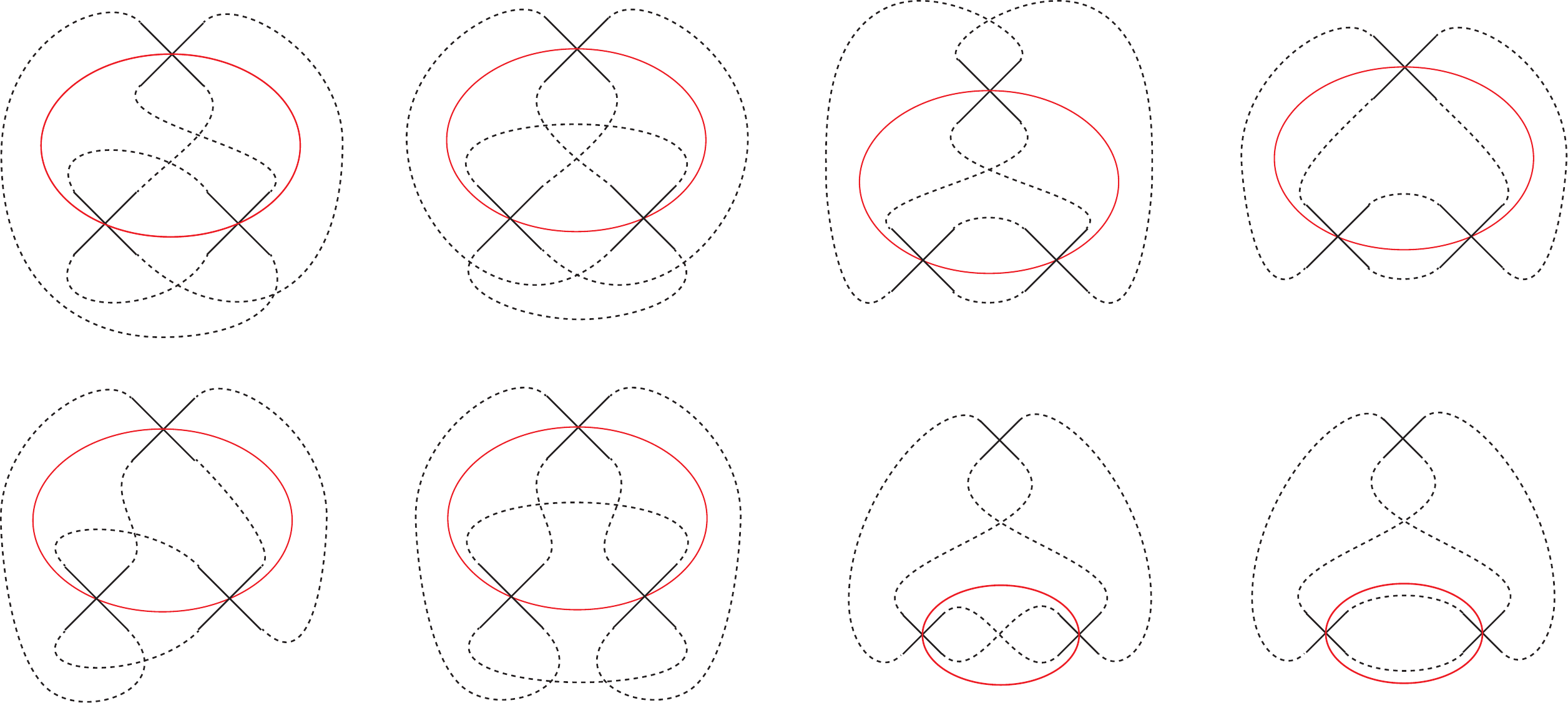}
\caption{Knot projections with $i(P) = 2$.  
%Dotted parts $x$ and $y$ must intersect each other.
}\label{ito}
\end{figure}
\end{theorem}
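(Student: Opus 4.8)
The plan is to run the same backwards-reconstruction argument used for Theorem~\ref{r2}, but with the Seifert smoothing in place of the non-Seifert one. For the \emph{only if} direction, suppose $i(P)=2$; since $P$ is reduced we have $i(P)\neq 0$, and by Proposition~\ref{prop1}~(4) we have $i(P)\neq 1$, so exactly two Seifert splices are involved. Let $P'$ be the reducible knot projection obtained from $P$ by applying Seifert splices simultaneously at two double points $a$ and $b$, fix a nugatory crossing $c$ of $P'$, and fix a simple closed curve $\gamma$ meeting $P'$ only at $c$ and bounding two disks $D_1,D_2\subset S^2$. After the two splices the points $a$ and $b$ persist in $P'$ as two small local arcs (two ``local places'' in the sense of Fig.~\ref{rf11}), and $P$ is recovered from $P'$ by performing the inverse Seifert splice $B$ at each of them. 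The first observation is that at least one of the two local places must meet $\gamma$: otherwise $\gamma$ would still meet $P$ only at $c$, exhibiting $c$ as a nugatory crossing of the reduced projection $P$, a contradiction.

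So I would organize the analysis by how the two local places sit relative to $\gamma$ (both met by $\gamma$, or exactly one met by $\gamma$) together with the finitely many ways the arcs at $a$, $b$ and $c$ can be joined through $\gamma$ and the disks $D_1,D_2$ — precisely the bookkeeping encoded by Fig.~\ref{rf11}. For each admissible pattern I would reconstruct $P$ by performing the two $B$ moves and read off the resulting picture, discarding any case in which the reconstructed curve is disconnected, or still has a nugatory crossing (so is not reduced), or in which one of the two splices was redundant (which would give $i(P)\le 1$). Because the Seifert splice is orientation-preserving, the arcs of $\gamma$ passing through the two local places are constrained to be compatible with the Seifert smoothings at $a$ and $b$, which rules out many patterns; what survives is exactly the family of Fig.~\ref{ito}, each member of which visibly contains a circle meeting $P$ in two double points (cf.~the coherent $2$-gon of Proposition~\ref{prop1}~(5)) or in three double points.

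For the \emph{if} direction, suppose $P$ contains one of the configurations of Fig.~\ref{ito}. The coherent $2$-gon case is already covered by Proposition~\ref{prop1}~(5), and the $C$-type $3$-gon case by Proposition~\ref{prop1}~(6); for each remaining configuration I would simply exhibit the two double points at which simultaneous Seifert splices produce a nugatory crossing at a third (indicated) double point, reading the splices directly off the dotted arcs of the figure. This yields $i(P)\le 2$, and combining with $i(P)\neq 0$ (as $P$ is reduced) and $i(P)\neq 1$ (Proposition~\ref{prop1}~(4)) gives $i(P)=2$.

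The step I expect to be the main obstacle is the completeness of the case analysis in the \emph{only if} direction: one must be certain that every interaction of $\gamma$ with the two local places has been listed, that the orientation constraint coming from the Seifert (rather than non-Seifert) splice is applied consistently in each case, and that every surviving reconstruction is genuinely a \emph{reduced} projection of the stated shape and not one carrying a further nugatory crossing. This is where the argument genuinely differs from the proof of Theorem~\ref{r2} — the set of allowed smoothings is different, so Fig.~\ref{ito} is not a relabelling of Fig.~\ref{rf5} and the enumeration has to be redone from scratch.
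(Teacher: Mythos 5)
Your proposal follows essentially the same route as the paper: the lower bound $2\le i(P)$ comes from the fact that a single Seifert splice changes the number of components (together with reducedness), and the classification is obtained by exhaustively applying the inverse Seifert splice $B$ at two local places of a reducible knot projection, exactly the enumeration the paper carries out via Fig.~\ref{rf11}, Fig.~\ref{rf12} and Fig.~\ref{rf12b}. Your extra bookkeeping (the circle $\gamma$ through the nugatory crossing and the explicit check of the ``if'' direction) is just a more detailed write-up of the same case analysis.
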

\begin{figure}
\includegraphics[width=10cm]{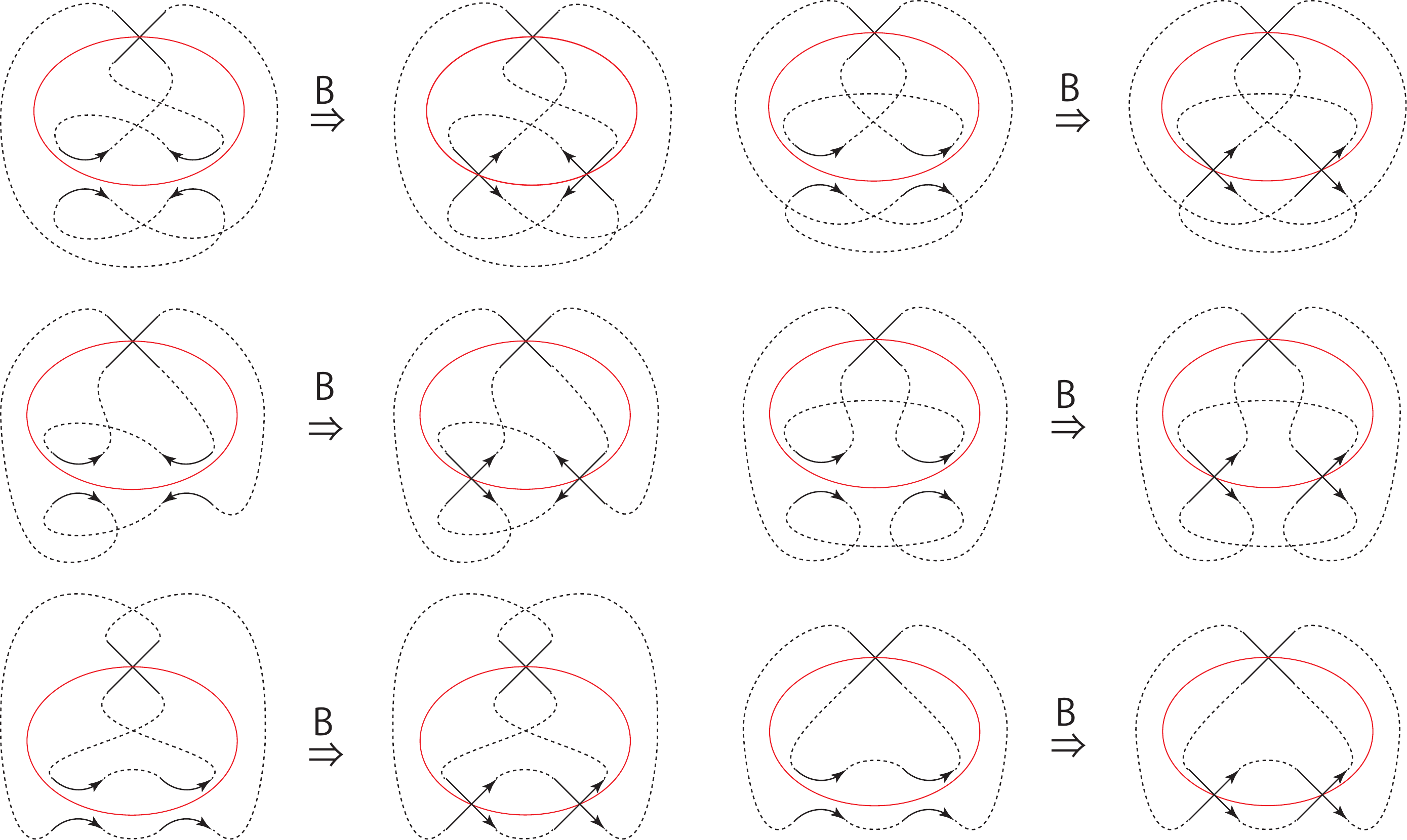}
\caption{Knot projections with $i(P)=2$.}\label{rf12}
\end{figure}
\begin{figure}[h!]
%\includegraphics[width=1cm]{itokiyaku2a}
%\qquad 
\includegraphics[width=5cm]{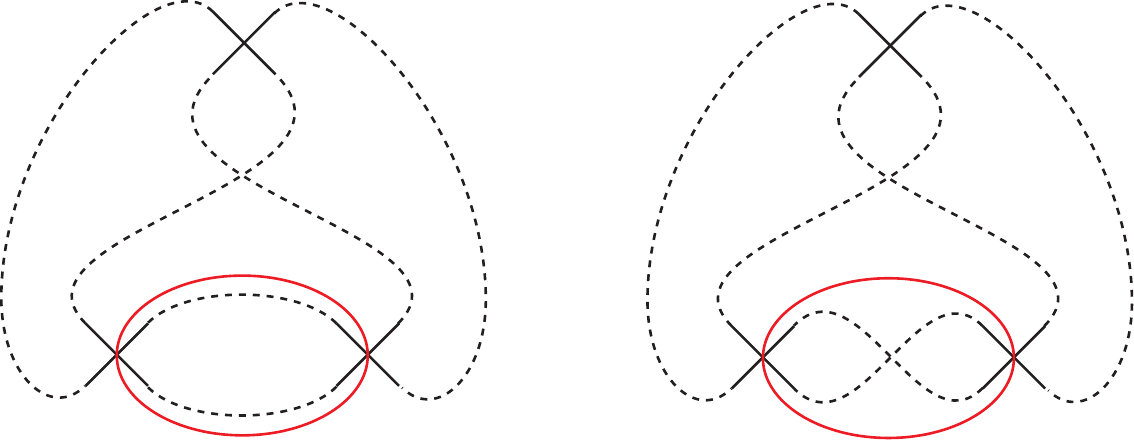}
\caption{Knot projections with $i(P)=2$.
%Dotted parts $x$ and $y$ must intersect each other.  An example is obtained in the figure on the right.  One double point to be operated by $B^{-1}$ should be chosen from the intersections of $x$ and $y$.
}\label{rf12b}
\end{figure}
\begin{proof}
One Seifert splice changes the number of components of a spherical curve, and, therefore, $2 \le i(P)$ for a reduced knot projection $P$.  As shown in Fig.~\ref{rf12} and Fig.~\ref{rf12b}, we obtain all possible ways to apply $B$ at the two places of a reducible knot projection.  
\end{proof}
%\begin{corollary}
%If $i(P)=2$ of a knot projection $P$, its chord diagram has sub-chord diagrams \tr or \h.
%\end{corollary}

Next, we consider the necessary and sufficient condition $y(P)=2$.  Similar to the above, we consider any possibilities of connections at two places, as shown in Fig.~\ref{rf11}.  It is clear that the case in the first line of Fig.~\ref{rf13} is impossible.  All the possible cases appear in the second line of Fig.~\ref{rf13}. 
\begin{figure}[htbp]
\includegraphics[width=12cm]{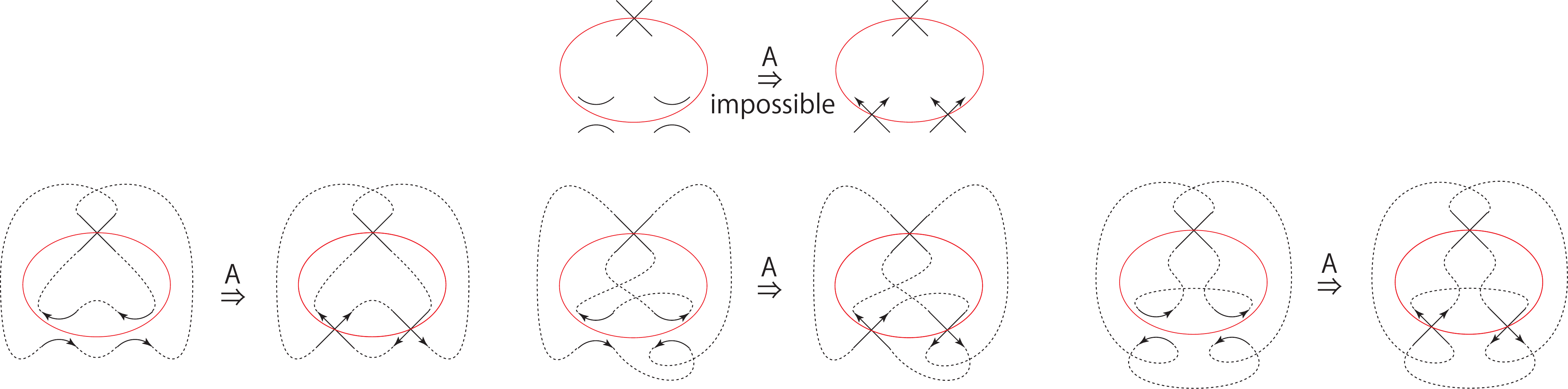}
\caption{Knot projections with $y(P) \le 2$.}\label{rf13}
\end{figure} 
\begin{theorem}\label{y_kiyakudo2_thm}
Let $P$ be a knot projection.  There is no possibility of existence of a simple circle, as in Fig.~\ref{rf6}, and there exits a circle intersecting $P$ at just three double points of $P$, as shown in Fig.~\ref{yusuke}, 
% (as shown in the two figures at the right of arrows in the second and third lines of Fig.~\ref{rf13})
if and only if $y(P)=2$.   
\begin{figure}[h!]
\includegraphics[width=8cm]{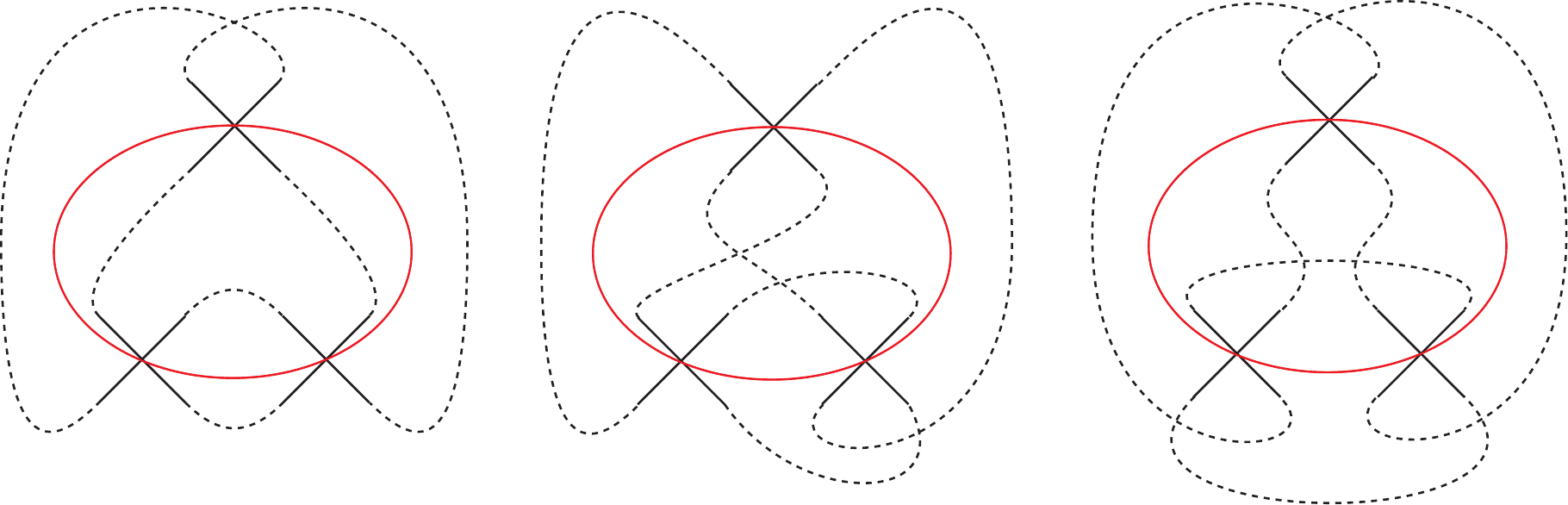}
\caption{Knot projections with $y(P) \le 2$.}\label{yusuke}
\end{figure}
\end{theorem}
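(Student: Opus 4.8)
The plan is to follow the pattern of the proofs of Theorems~\ref{r1}, \ref{r2} and \ref{i_kiyakudo2_thm}, now with two \emph{simultaneous} non-Seifert splices in place of a single recursive one; as in those statements we take $P$ reduced (otherwise $y(P)=0$). One implication is immediate. If a circle meets $P$ at exactly three double points in one of the patterns of Fig.~\ref{yusuke}, then applying non-Seifert splices simultaneously at two suitably chosen of those three double points makes the remaining one nugatory, so $y(P)\le 2$; and since there is no Fig.~\ref{rf6} curve we have $y(P)\neq 1$ by Theorem~\ref{r1} and Corollary~\ref{try_cor}, while $y(P)\neq 0$ because $P$ is reduced, so $y(P)=2$. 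Conversely, if $y(P)=2$ then $y(P)\neq 1$, so again by Theorem~\ref{r1} and Corollary~\ref{try_cor} there is no Fig.~\ref{rf6} curve, and it remains only to exhibit the required three-double-point circle.

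For that last step I would argue by reconstruction, as in Theorems~\ref{r2} and \ref{i_kiyakudo2_thm}. Fix double points $a_1,a_2$ of $P$ for which the simultaneous non-Seifert splice at $\{a_1,a_2\}$ produces a reducible knot projection $P'$ with nugatory crossing $c$, and let $\gamma$ be a simple closed curve meeting $P'$ only at $c$ in the pattern of Fig.~\ref{rf1}. Recovering $P$ amounts to applying the inverse move $A$ at the two local places of $P'$ sitting where $a_1$ and $a_2$ were spliced (Fig.~\ref{rf9}); the position of these two local places relative to $c$ and to the two sides of $\gamma$ is captured, up to the symmetries involved, by the finitely many possibilities drawn in Fig.~\ref{rf11}. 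I would enumerate these possibilities: the configurations in the first line of Fig.~\ref{rf13} cannot occur because the reconstructed object fails to be a single connected immersed circle (or, where relevant, would already carry a Fig.~\ref{rf6} curve, forcing $y(P)=1$), whereas each configuration in the second line of Fig.~\ref{rf13} reconstructs precisely one of the knot projections listed in Fig.~\ref{yusuke}. In every surviving case the curve $\gamma$, after an isotopy on $S^2$ pushing it across the recreated double points $a_1,a_2$, becomes a circle meeting $P$ at exactly the three double points $a_1,a_2,c$ in the asserted pattern. Since a knot projection has one component and mirror images are not distinguished here, fixing an orientation to draw these figures (as in Fig.~\ref{rf10}) costs no generality.

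The main obstacle is the completeness and accuracy of this case analysis. One must be sure that Fig.~\ref{rf11} really exhausts the ways the two local places can lie with respect to the nugatory circle $\gamma$; that every case in the first line of Fig.~\ref{rf13} is genuinely impossible, which is precisely where the hypothesis $y(P)\neq 1$ (equivalently, the absence of a Fig.~\ref{rf6} curve) must be brought to bear to discard the degenerate reconnections; and that the surviving cases collapse exactly onto the finite list of Fig.~\ref{yusuke} and produce no further projections. A smaller technical point is the treatment of configurations in which the two local places are not disjointly situated, and the check that the orientation chosen to draw the pictures does not suppress any unoriented case.
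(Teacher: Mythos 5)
Your proposal follows essentially the same route as the paper: the paper's proof is exactly the reconstruction argument you describe, applying the inverse move $A$ at two local places of a reducible projection, enumerating the possible connections via Fig.~\ref{rf11}, discarding the first line of Fig.~\ref{rf13} as impossible, and reading off the surviving cases in the second line as the list in Fig.~\ref{yusuke}. Your added details (the easy direction, the use of Corollary~\ref{try_cor} to exclude $y(P)=1$, and the isotopy of the nugatory circle across the recreated double points) only make explicit what the paper leaves to the figures, so there is no substantive difference.
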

\begin{proof}
Fig.~\ref{rf13} shows all possible ways to apply $A$ at the two places of a reducible knot projection.  
\end{proof}
\begin{corollary}
If $y(P)=2$ for a knot projection $P$, its chord diagram has a sub-chord diagram \h.  
\end{corollary}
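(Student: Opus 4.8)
The plan is to read the corollary straight off Theorem~\ref{y_kiyakudo2_thm}. Recall first what the symbol \h denotes: it is the chord diagram on three chords in which two of the chords are mutually non-crossing while the third crosses both of them. Since a sub-chord diagram is obtained simply by deleting chords, and conversely adjoining more chords can never destroy a pattern that is already present, it suffices to exhibit inside the chord diagram of $P$ three double points whose three associated chords are arranged in exactly this \h pattern. So the whole argument reduces to locating such a triple whenever $y(P)=2$.

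By Theorem~\ref{y_kiyakudo2_thm}, since $y(P)=2$ the knot projection $P$ carries a simple closed curve $\gamma$ meeting $P$ at exactly three double points $a$, $b$, $c$, and the triple $(a,b,c)$ together with the way the branches of $P$ connect along $\gamma$ is one of the finitely many local pictures listed in Fig.~\ref{yusuke} (equivalently, the second line of Fig.~\ref{rf13}). For each such picture the plan is to trace the single component $P$, recording how the six branch-germs at $a$, $b$, $c$ are visited, hence how the six preimages of $a,b,c$ sit cyclically on the defining circle of the chord diagram, and therefore what the three chords $\chi_a,\chi_b,\chi_c$ look like. The expectation is that in every case two of these chords are non-crossing and the remaining one crosses both: $\gamma$ splits $S^2$ into two disks, each containing two branch-germs of each of $a,b,c$, the arcs of $P$ inside one disk pair these germs into three arcs, and the combinatorial type forced by Fig.~\ref{yusuke} is precisely the one producing the diagram \h. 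This is entirely parallel to the corollary following Theorem~\ref{r2}, where the analogous inspection of Fig.~\ref{rf5} yields \h as well, so the two proofs can be organized in the same way.

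The main obstacle is the finite but not wholly mechanical case check: for each configuration of Fig.~\ref{yusuke} one must verify that $\chi_a,\chi_b,\chi_c$ are genuinely three distinct chords and that they realize the \h pattern rather than the all-mutually-crossing diagram \tr or the all-non-crossing one. Two small points need attention here. First, one should confirm that the "no circle as in Fig.~\ref{rf6}" clause of Theorem~\ref{y_kiyakudo2_thm} is used only to rule out the $y(P)=1$ situation and does not interfere with the chord count coming from $\gamma$. Second, because knot projections are not distinguished from their mirror images in this section, one must check that reflecting the pictures of Fig.~\ref{yusuke} gives the same chord diagram; this causes no trouble since \h is symmetric. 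Once these verifications are carried out, the chord diagram of $P$ contains \h as a sub-chord diagram, which is the claim.
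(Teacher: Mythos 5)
Your proposal is correct and takes essentially the same route as the paper, which states this corollary without further proof as an immediate consequence of Theorem~\ref{y_kiyakudo2_thm}: one inspects the finitely many configurations of Fig.~\ref{yusuke} and reads off that the three double points on the circle give three chords forming \h, exactly the verification you outline. The only difference is one of explicitness --- you leave the finite case check as an expectation rather than carrying it out --- but that matches (indeed slightly exceeds) the level of detail the paper itself provides.
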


Finally, we determine knot projections with $t(P)=2$ using Theorems~\ref{i_kiyakudo2_thm} and \ref{y_kiyakudo2_thm}.  Based on Fig.~\ref{rf11}, we have considered all possibilities of connections consisting of one kind of operation, either (B, B) or (A, A), at two places.  Therefore, we add all the cases of mixed type (A, B).  There are four such cases. (See Fig.~\ref{rf14}.)  
\begin{figure}[h!]
%\begin{tabular}{cc}
\includegraphics[width=10cm]{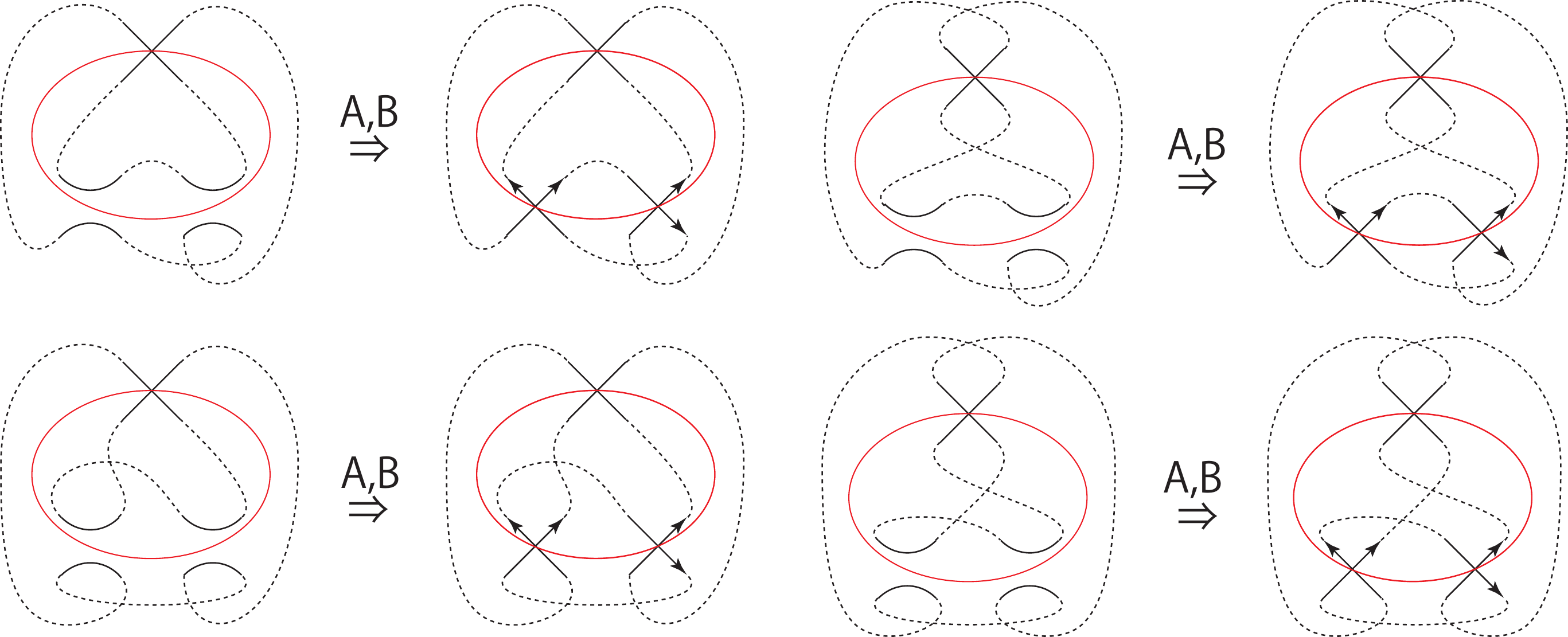}
%&\includegraphics[width=5cm]{taniyamakiyaku2b}
%\end{tabular}
\caption{Four cases obtaining knot projections with $t(P) \le 2$.}\label{rf14}
\end{figure}
This leads to Theorem~\ref{t_kiyakudo2_thm}.
\begin{theorem}\label{t_kiyakudo2_thm}
Let $P$ be a reduced knot projection.  
There is no possibility of existence of a simple closed curve, as shown in Fig.~\ref{rf6}, and there exists a circle intersecting $P$ at just two or just three double points of $P$, as shown in Fig.~\ref{taniyama}, 
%Fig.~\ref{rf14}, Fig.~\ref{rf12}, Fig.~\ref{rf12b}, and Fig.~\ref{rf13} $($second and third lines and right column$)$,  
if and only if $t(P)=2$.  
\begin{figure}[h!]
\includegraphics[width=8cm]{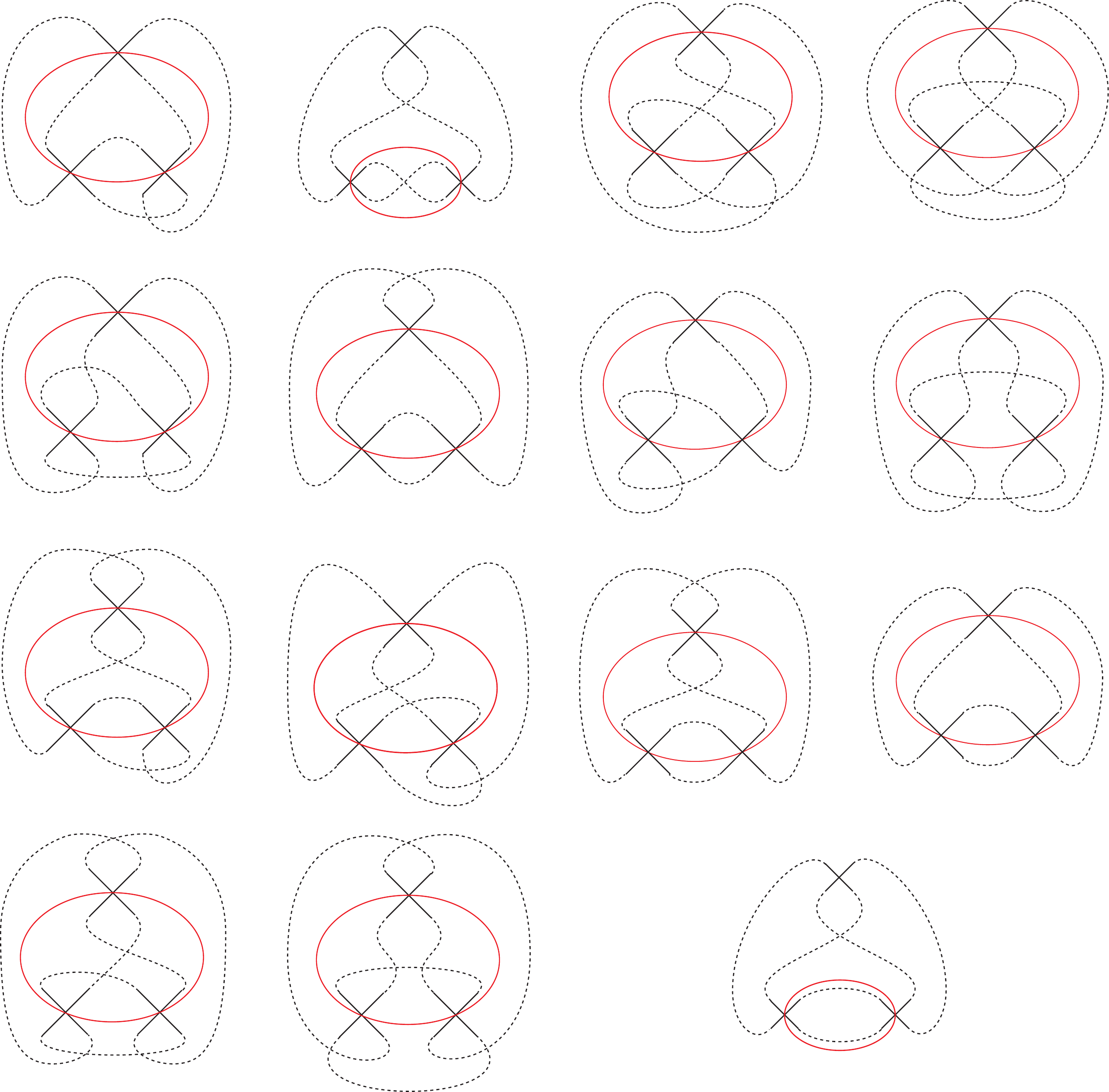}
\caption{Knot projections with $t(P) \le 2$.  
%Dotted parts $x$ and $y$ must intersect each other.
}\label{taniyama}
\end{figure}
\end{theorem}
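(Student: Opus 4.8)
The plan is to split the argument according to which kinds of splices realize the minimum, using Theorems~\ref{i_kiyakudo2_thm} and~\ref{y_kiyakudo2_thm} for the two ``pure'' cases and treating the ``mixed'' case by hand, exactly as in the proofs for $r(P)$, $i(P)$, and $y(P)$. Suppose first that $t(P)=2$. Since $P$ is reduced we have $t(P)\neq 0$, and since $t(P)=2\neq 1$, Corollary~\ref{try_cor} shows that $P$ admits no simple closed curve of the type in Fig.~\ref{rf6}; this is the first assertion. By definition there are two double points $a$ and $b$ of $P$ and a splice at each, Seifert or non-Seifert, which, applied simultaneously, yield a reducible knot projection $P''$ with a nugatory crossing $c$. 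If both splices are non-Seifert, then $y(P)\le 2$, and since $y(P)\ge t(P)=2$ by Proposition~\ref{prop1}(2) we get $y(P)=2$; by Theorem~\ref{y_kiyakudo2_thm}, $P$ is one of the projections of Fig.~\ref{yusuke}. If both splices are Seifert, then $i(P)\le 2$, and since $i(P)$ is a positive even integer by Proposition~\ref{prop1}(4) (using again that $P$ is reduced) we get $i(P)=2$; by Theorem~\ref{i_kiyakudo2_thm}, $P$ is one of the projections of Fig.~\ref{ito}.

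It remains to handle the case in which one of the two splices is Seifert and the other non-Seifert. We recover $P$ from $P''$ by applying, at the two marked local places, the inverse operation $B$ of a Seifert splice at one and the inverse operation $A$ of a non-Seifert splice at the other, while the nugatory crossing $c$ is present. Starting from the local picture of Fig.~\ref{rf11}, we enumerate all planar ways of joining up the arcs issuing from the two local places and running around $c$; the requirement that the resulting curve $P$ be connected (hence an actual knot projection) discards the degenerate joinings and leaves precisely the four configurations in Fig.~\ref{rf14}. Inspecting them shows that each such $P$ carries a circle meeting $P$ in just two or just three double points in one of the shapes of Fig.~\ref{taniyama}. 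Since Fig.~\ref{taniyama} is exactly the union of Figs.~\ref{ito} and~\ref{yusuke} with these four configurations, the necessity of the condition follows.

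Conversely, let $P$ be reduced, admit no simple closed curve as in Fig.~\ref{rf6}, and carry a circle meeting $P$ in just two or just three double points as in Fig.~\ref{taniyama}. If $P$ is among the projections of Fig.~\ref{ito} then $i(P)=2$ by Theorem~\ref{i_kiyakudo2_thm}, hence $t(P)\le i(P)=2$ by Proposition~\ref{prop1}(3); if $P$ is among those of Fig.~\ref{yusuke} then $y(P)=2$ by Theorem~\ref{y_kiyakudo2_thm}, hence $t(P)\le y(P)=2$ by Proposition~\ref{prop1}(2); and if $P$ is one of the four configurations of Fig.~\ref{rf14} then the displayed pair consisting of one Seifert splice and one non-Seifert splice, applied simultaneously, produces a reducible knot projection, so again $t(P)\le 2$. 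On the other hand $t(P)\neq 0$ since $P$ is reduced, and $t(P)\neq 1$ by Corollary~\ref{try_cor} since $P$ has no curve of the type in Fig.~\ref{rf6}. Therefore $t(P)=2$, completing the proof.

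The delicate part is the exhaustive enumeration in the mixed case: one must check that every way of closing up the two local places of Fig.~\ref{rf11} around the nugatory crossing either fails to give a one-component curve or yields one of the four pictures of Fig.~\ref{rf14}, and that these four genuinely complete the lists of Figs.~\ref{ito} and~\ref{yusuke} to the list of Fig.~\ref{taniyama}, without producing a projection that is already reducible or that secretly carries a Fig.~\ref{rf6} curve. This bookkeeping, rather than any conceptual difficulty, is the main obstacle.
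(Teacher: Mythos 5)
Your proposal is correct and follows essentially the same route as the paper: the pure cases (two non-Seifert or two Seifert splices) are reduced to Theorems~\ref{y_kiyakudo2_thm} and \ref{i_kiyakudo2_thm} via the inequalities of Proposition~\ref{prop1}, and the mixed case is handled by the same enumeration of inverse moves $A$ and $B$ from Fig.~\ref{rf11} yielding the four configurations of Fig.~\ref{rf14}, with $t(P)\neq 0,1$ ruled out by reducedness and Theorem~\ref{r1}/Corollary~\ref{try_cor}. Your write-up is in fact more explicit than the paper's, which essentially asserts the conclusion after displaying the four mixed cases.
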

%\begin{corollary}
%If $t(P)=2$ of a knot projection $P$, its chord diagram has sub-chord diagrams \tr or \h.
%\end{corollary}

\section{Tables}\label{sec_table}
In this section, we provide tables of prime reduced knot projections with the four types of reductivities.  A {\it{prime knot projection}} is defined as a knot projection that can not be a connected sum of two non-trivial knot projections.  A knot projection is called a {\it{prime reduced}} knot projection if the knot projection is prime and reduced (cf.~Sec.~\ref{intro}).  
% that is not a knot projection similar to $\infty$.  
\begin{table}[htbp]
\includegraphics[width=11cm]{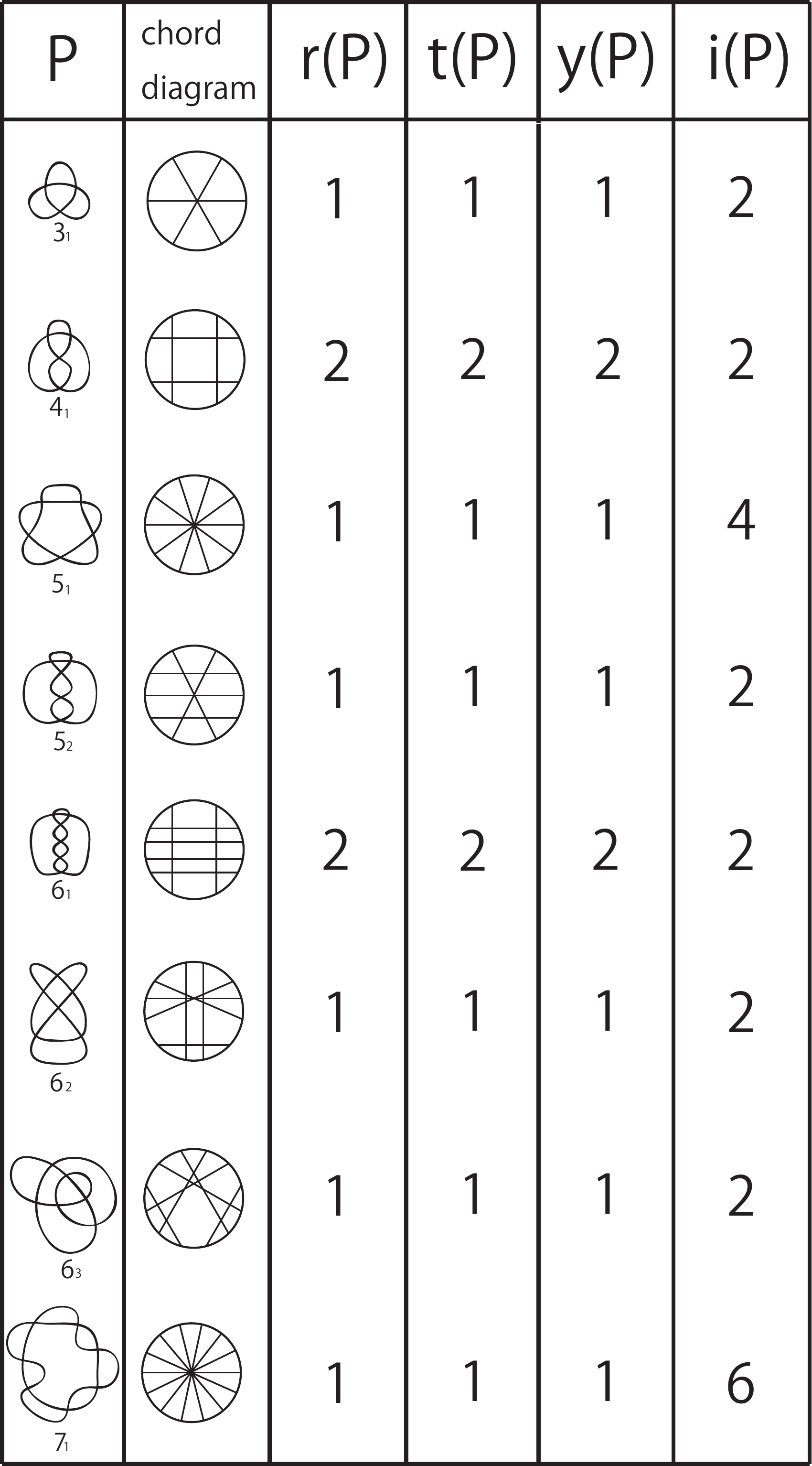}
\caption{Prime reduced knot projections and reductivities ($3_1$--$7_1$).}
\end{table}
\begin{table}[htbp]
\includegraphics[width=11cm]{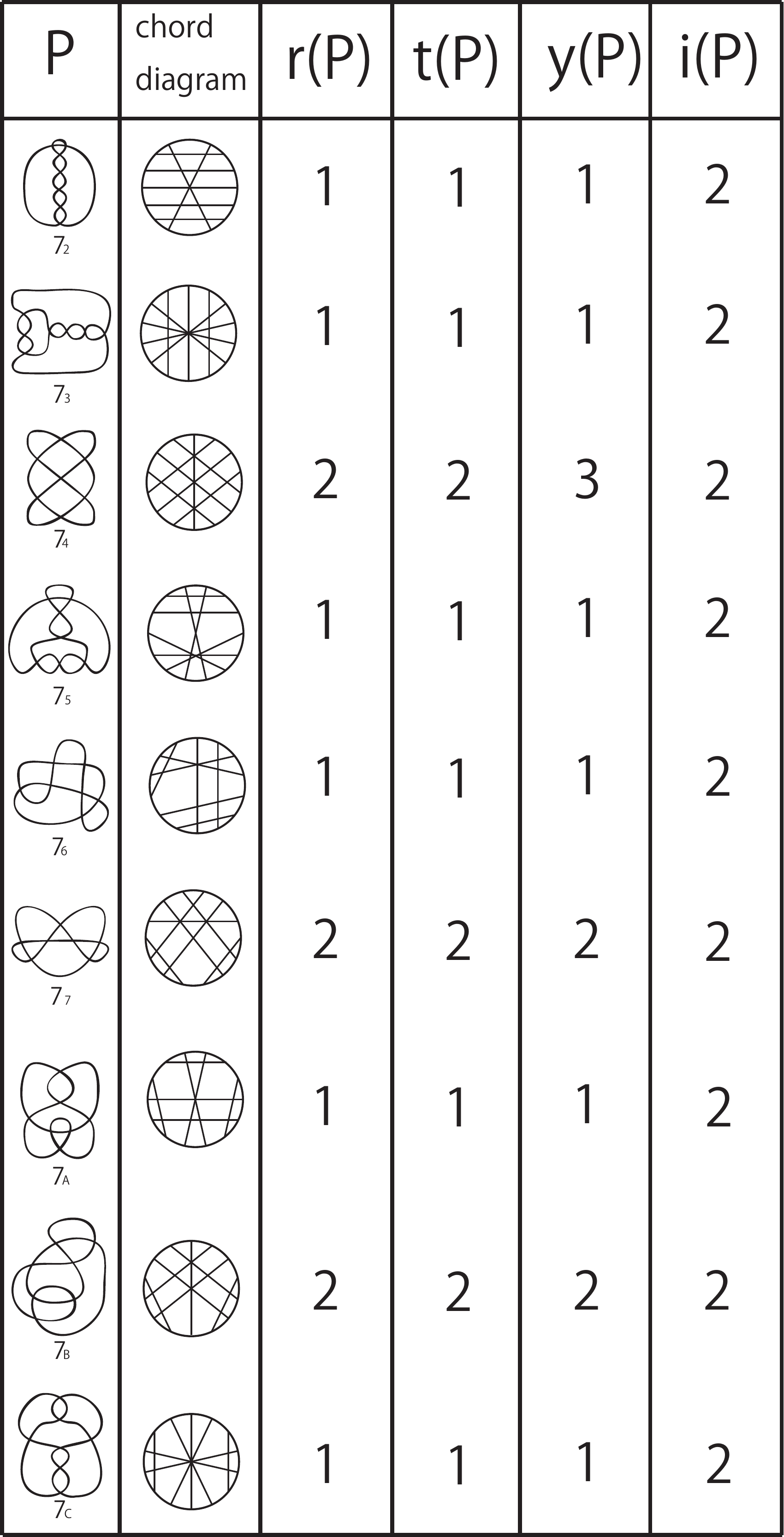}
\caption{Prime reduced knot projections and reductivities ($7_2$--$7_C$).}
\end{table}

\section*{Acknowledgements}
The authors would like to thank Professor Kouki Taniyama for his useful comments.  The authors also thank Ms.~Senja Barthel for fruitful discussion and providing us with her examples.  The part of this work contributed by N.~Ito was supported by a Waseda University Grant for Special Research Projects (Project number: 2014K-6292) and the JSPS Japanese-German Graduate Externship.

\end{document}